\newtheorem{prethm}{{\bf Theorem}}
\newenvironment{thm}{\begin{prethm}\sl{\hspace{-0.5
               em}{\bf.}}}{\end{prethm}}
\newtheorem{prepro}[prethm]{{\bf Proposition}}
\newtheorem{prelem}[prethm]{{\bf Lemma}}
\newenvironment{lem}{\begin{prelem}\sl{\hspace{-0.5
               em}{\bf.}}}{\end{prelem}}
\newtheorem{predeff}[prethm]{{\bf Definition}}
\newtheorem{precor}[prethm]{{\bf Corollary}}
\newenvironment{cor}{\begin{precor}\sl{\hspace{-0.5
               em}{\bf.}}}{\end{precor}}
\newtheorem{preconj}[prethm]{{\bf Conjecture}}
\newenvironment{conj}{\begin{preconj}\sl{\hspace{-0.5
               em}{\bf.}}}{\end{preconj}}
\newtheorem{preremark}[prethm]{{\bf Remark}}
\newtheorem{preexample}[prethm]{{\bf Example}}
\newtheorem{preproof}{{\bf\textsf{Proof.}}}
\newenvironment{proof}[1]{\begin{preproof}{\rm
               #1}\hfill{$\Box$}}{\end{preproof}}
\newcommand{\bmi}[1]{\mbox{\boldmath $ #1$}}
\DeclareMathAlphabet{\mathpzc}{OT1}{pzc}{m}{it}
\title{\bf\Large  On Order and Rank of Graphs}
\author{\large E. Ghorbani$^{\,\rm 1, 2}$ \quad \quad  A. Mohammadian$^{\,\rm 2}$ \quad \quad B. Tayfeh-Rezaie$^{\,\rm 2}$\\[.4cm]
{\sl $^{\rm 1}$Department of Mathematics, K.N. Toosi University of Technology,}\\
{\sl P.O. Box 16315-1618, Tehran, Iran}\\[0.3cm]
{\sl $^{\rm 2}$School of Mathematics, Institute for Research in Fundamental
Sciences (IPM),}\\{\sl P.O. Box
19395-5746, Tehran, Iran }
\\[0.5cm]{
$\mathsf{e\_ghorbani@ipm.ir}$ \quad\quad  $\mathsf{ali\_m@ipm.ir}$ \quad\quad  $\mathsf{tayfeh}$-$\mathsf{r@ipm.ir}$}}
\date{}
\begin{document}
\maketitle

\vspace{5mm}

\begin{abstract}
The rank of a graph is defined to be the rank of its adjacency matrix.
A graph is called  reduced if it has no isolated vertices and no two vertices  with the same set of neighbors.
Akbari, Cameron, and  Khosrovshahi conjectured that the number of vertices   of  every   reduced graph of rank $r$  is at most $m(r) = 2^{(r+2)/2}-2$ if $r$ is even and
$m(r) = 5 \cdot 2^{(r-3)/2} - 2$ if $r$ is odd. In this article, we prove  that if the conjecture is not true, then there would be   a  counterexample  of  rank  at most $46$.
We also show  that   every reduced graph of  rank $r$ has  at most $8m(r)+14$ vertices.

\vspace{5mm}
\noindent {\bf Keywords:}  adjacency matrix,  rank, reduced   graph. \\[.1cm]
\noindent {\bf AMS Mathematics Subject Classification\,(2010):}   05C50,  15A03.
\end{abstract}

\vspace{5mm}

\section{Introduction}

For a  graph $G$, we denote  by $V(G)$ the vertex set of $G$. The {\sl order} of $G$ is the number of vertices of  $G$ and denoted by $|G|$.
The {\sl adjacency matrix}  of $G$, denoted by  $A(G)$,  has  its   rows and
columns  indexed by  $V(G)$  and its     $(u, v)$-entry  is $1$ if the vertices $u$ and
$v$ are adjacent and $0$ otherwise.
The {\sl rank} of  $G$, denoted by $\mathrm{rank}(G)$,  is the  rank of $A(G)$.

The problem of bounding the order of a graph in terms
of the   rank  was first studied by
Kotlov and   Lov\'asz  \cite{lov}.
Their motivation was to  determine  the gap between the   chromatic number and the  rank of graphs  originated
from the rank-coloring conjecture of van Nuffelen \cite{nuf}. The conjecture stated that
the  chromatic number of every
graph with at least one edge does not exceed the  rank. The first counterexample to the conjecture was obtained by Alon and
Seymour \cite{as}. A superlinear gap was  found by Razborov \cite{raz} and  a larger  gap
was provided by Nisan and Wigderson \cite{nw}. This problem, indeed, has a close connection with the    log-rank conjecture by Lov\'asz and Saks \cite{ls} from   communication complexity
which is equivalent to the statement  that the logarithm   of the chromatic number of any graph is bounded above by a polylogarithmic function of the rank, see \cite{ls1}.

The order of a graph with rank $r$ is  trivially  bounded above by   $2^r-1$ as soon as we make the assumption
that the graph is reduced; that is, it has no isolated vertices and no two vertices  with the same set of neighbors.
In fact,  over the two element field this bound is achievable by a unique graph \cite{god}.
We only consider
the rank of graphs  over the field of real numbers.
Kotlov and   Lov\'asz  \cite{lov}  proved that  there exists  a  constant $c$  such  that the order of every  reduced  graph of  rank $r$ is at most $c\cdot2^{r/2}$.
Later on,  Akbari, Cameron, and  Khosrovshahi  \cite{akb}    made the following conjecture.

\begin{conj}\label{ackconj}
For every integer $r\geqslant2$, the order of any     reduced  graph of  rank $r$
does not exceed   $m(r)$, where
\begin{eqnarray*}
m(r)=\left\{\begin{array}{ll}
2^{(r+2)/2}-2 &  \text{if $r$ is even}, \\
5\cdot2^{(r-3)/2}-2 & \text{if $r$ is odd}.
\end{array}\right.
\end{eqnarray*}
\end{conj}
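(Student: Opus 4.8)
The plan is to prove the bound by induction on the rank, built on the identity
\begin{eqnarray*}
m(r)=2\,m(r-2)+2,
\end{eqnarray*}
which holds in both parities: for even $r$ one has $2\bigl(2^{r/2}-2\bigr)+2=2^{(r+2)/2}-2$, and for odd $r$ one has $2\bigl(5\cdot2^{(r-5)/2}-2\bigr)+2=5\cdot2^{(r-3)/2}-2$. Since the step $r\mapsto r-2$ preserves parity, the induction runs in two threads with base cases $r=2$ (the unique reduced graph being $K_2$, of order $m(2)=2$) and $r=3$ (of order $m(3)=3$). Assuming the bound for rank $r-2$, the goal is to pass from a reduced graph $G$ of rank $r$ to a reduced graph of rank $r-2$ while the order at most doubles, up to an additive $2$.

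First I would delete a single vertex $v$ and record how $G-v$ fails to be reduced. A vertex $w\neq v$ becomes isolated exactly when $N_G(w)=\{v\}$, and two such vertices would coincide in their neighborhoods in $G$; hence the set $I$ of newly isolated vertices satisfies $|I|\leqslant1$. Two vertices $x,y$ become twins exactly when $N_G(x)\,\triangle\,N_G(y)=\{v\}$, so that one lies in $N(v)$ and the other does not, and reducedness forces every such twin class to have size exactly $2$; write $t$ for the number of these pairs. Let $G'$ be the reduced graph obtained from $G-v$ by discarding $I$ and collapsing each pair to one vertex. Discarding isolated vertices and merging equal rows and columns preserve the rank, whereas deleting the row and column of $v$ lowers it by at most $2$, so $\mathrm{rank}(G')\geqslant r-2$. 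Counting vertices gives $|G|=|G'|+1+|I|+t$, and since distinct pairs collapse to distinct vertices of $G'$ we also have $t\leqslant|G'|$.

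The argument then closes cleanly \emph{provided the rank drops by exactly two}, i.e. $\mathrm{rank}(G')=r-2$: the inductive hypothesis gives $|G'|\leqslant m(r-2)$, hence $t\leqslant m(r-2)$ as well, and therefore
\begin{eqnarray*}
|G|=|G'|+1+|I|+t\leqslant m(r-2)+1+1+m(r-2)=2\,m(r-2)+2=m(r).
\end{eqnarray*}
Taking the contrapositive, any reduced graph of rank $r$ with more than $m(r)$ vertices can have \emph{no} vertex whose deletion lowers the rank by exactly $2$. The entire conjecture (above the base cases) is thereby reduced to a single assertion: every reduced graph of rank $r$ all of whose vertex-deletions drop the rank by at most $1$ has order at most $m(r)$.

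This last family is the main obstacle, and it is what keeps the statement a conjecture. A rank drop of $1$ yields only $|G|\leqslant 2\,m(r-1)+2$, which exceeds $m(r)$ because $m(r)\leqslant 2\,m(r-1)$, and a drop of $0$ is worse still; so the clean recursion is genuinely unavailable for these graphs. That they exist is shown already by $C_5$, which is reduced of rank $5$ while deleting any vertex leaves $P_4$ of rank $4$, a drop of only $1$. To finish one would want to prove that such rank-robust graphs are forced to be small, plausibly through an averaging argument showing that a reduced graph whose order is large relative to $r$ must contain a rank-drop-$2$ vertex, so that the exceptions form a bounded family amenable to direct analysis. Establishing this dichotomy is precisely the missing step: the inability to bound the rank-robust graphs is exactly what degrades the available estimates to the $O(2^{r/2})$ bound of Kotlov and Lov\'asz and to the $8m(r)+14$ bound, and even the reduction that a counterexample would have rank at most $46$ gives no direct relief, since for each such rank the extremal order is of size roughly $2^{r/2}$ and lies far beyond exhaustive checking.
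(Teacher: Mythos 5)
You were asked to prove Conjecture~\ref{ackconj}, but this statement is an open conjecture: the paper itself contains no proof of it, only two partial results, namely that a minimal counterexample would have rank at most $46$ (Theorem~\ref{mainthm}) and that every reduced graph of rank $r$ has order at most $8m(r)+14$ (Theorem~\ref{8main}). Your write-up is, to its credit, honest about this. The inductive skeleton you set up is sound: the identity $m(r)=2m(r-2)+2$ is correct, the accounting after deleting a vertex $v$ is correct (at most one newly isolated vertex, twin classes of size exactly two since a third member would force duplicated vertices in $G$ itself, twins non-adjacent, and the quotient $G'$ reduced with $|G|=|G'|+1+|I|+t$ and $t\leqslant|G'|$), and the step closes precisely when $\mathrm{rank}(G-v)=r-2$ for some $v$. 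But you then concede, correctly, that graphs in which every single-vertex deletion lowers the rank by at most $1$ (your $C_5$ example is a valid witness) escape the recursion, and that bounding this ``rank-robust'' family is the missing step. That concession is the genuine gap, and it is not a repairable oversight in your argument: it is exactly the part of the problem that remains open.

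It is worth seeing how the paper attacks that same hard regime, since it explains why even its authors stop short of a proof. Rather than deleting one vertex, the paper works with $\rho(G)$, the minimum number of vertices whose removal lowers the rank, and $\tau(G)$, the minimum number whose removal creates duplicated vertices, with $\rho(G)\leqslant\tau(G)=\min|\mathnormal{\Delta}(u,v)|$ over non-adjacent pairs (Corollary~\ref{lova}). The key quantitative input is a spherical-code argument: replacing $0$ by $-1$ in $A(G)$ produces $n$ unit vectors in $\mathbb{R}^{r+1}$ with pairwise angles at least $\cos^{-1}\big(\tfrac{n-2\rho}{n}\big)$, so the Rankin and Leven\v{s}te\u{\i}n bounds (Theorems~\ref{rankin} and~\ref{Levbd}) force $\rho<\big(1-\tfrac{1}{\sqrt2}\big)n$ in any would-be counterexample (Lemma~\ref{a1}) --- this is the ``averaging argument showing a large graph must have a substantial rank-dropping set'' that you wished for, except it yields a set of vertices, not a single vertex, and only for $r\geqslant47$. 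Combined with the structural analysis of the maximal induced subgraph with duplicated vertices (Lemma~\ref{lov} and Theorem~\ref{aslasl}), this rules out a minimal counterexample of rank $r\geqslant47$, but the numerics of the code bounds genuinely fail below that threshold; hence the paper proves only the reduction to rank at most $46$, plus the weaker unconditional bound $8m(r)+14$ by running the same machinery with the relaxed recursion $m'(r)=2m'(r-2)+2$. So your proposal should not be presented as a proof: its final section correctly identifies that the dichotomy it needs is unproven, and no completion of it exists in the paper or, as far as is known, anywhere else.
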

They also constructed some reduced  graphs of rank $r$ and order $m(r)$, for every integer $r\geqslant2$.
In this article, we show  that if  Conjecture  \ref{ackconj} is not true, then there would be   a  counterexample  of rank  at most $46$. From our arguments,  it also follows that the order of every  reduced graph of  rank $r$  is at most $8m(r)+14$.

Recently, some relevant results were  obtained by a number of authors. Haemers and  Peeters  \cite{ham}  proved Conjecture \ref{ackconj} for  graphs  containing  an induced matching of size $r/2$  or an induced subgraph consisting a  matching of size $(r-3)/2$ and a cycle of length $3$.
Royle   \cite{roy} proved that the rank of every reduced graph containing no path of length $3$ as an
induced subgraph is equal to the order.
In \cite{gmt,tri},  we proved   that the order of every reduced tree, bipartite graph, and non-bipartite triangle-free graph of rank $r$  is at most $3r/2-1$,  $2^{r/2}+r/2-1$, and $3\cdot2^{\lfloor r/2\rfloor-2}+\lfloor r/2\rfloor$, respectively, and we characterized  all the  corresponding  graphs achieving these bounds.

\section{Notation and Preliminaries}\label{prel}

For a vertex $v$ of a graph  $G$, let $N_G(v)$ denote  the set of
all vertices of $G$ adjacent to $v$.
By $\mathnormal{\Delta}_G(u,v)$ we mean    the  symmetric difference of $N_G(u)$ and $N_G(v)$.
We will drop
the subscript $G$  when it is clear from the context.
Two vertices $u$ and $v$ of $G$ are called {\sl duplicated vertices} if $N(u)=N(v)$.
We say that  $G$ is {\sl reduced} if it has no isolated vertex  and no duplicated vertices.
A subset $S$ of  $V(G)$ with $|S|>1$  is called  a  {\sl duplication  class} of $G$  if  $N(u)=N(v)$
for any $u, v\in  S$.
For  a subset  $X$  of   $V(G)$,  $\langle X\rangle$ and  $G-X$ represent
the induced subgraphs of $G$ on $X$ and on $ V(G)\setminus X$, respectively. We use the same notation if $X$ is a subgraph of $G$. For a vertex $v\in V(G)$, we write  $G-v$ for  $G-\{v\}$.
For  a matrix $M$, we denote by $\mathrm{row}(M)$ the vector  space generated by the row vectors  of $M$ over the field of real numbers.
We  use  the notation  $\bmi{j}_k$  and  $J_{r\times s}$  for   the all one vector of length $k$ and  the  $r\times s$ all one matrix, respectively. The complete graph of order  $n$ is denoted by $\text{\sl{K}}_n$. For a graph $G$ with at least one edge, let $\rho(G)$ denote the minimum number of vertices whose removal results in a graph with a smaller rank.
If $G$ is not a  complete graph, then we  denote by $\tau(G)$ the minimum number of vertices whose removal results in a graph with  duplicated vertices.

\begin{lem}\label{lov0} {\rm\cite{kot, lov}}
For  any  reduced graph $G$,    the following hold.
\begin{itemize}
\item [{\rm (i)}] For every $v\in V(G)$, $\mathrm{rank}(G-N(v))\leqslant\mathrm{rank}(G)-2$.
\item [{\rm (ii)}] For every adjacent vertices $u,v\in V(G)$, $\mathrm{rank}(G-\mathnormal{\Delta}(u,v))\leqslant\mathrm{rank}(G)-1$.
\item [{\rm (iii)}]  For every non-adjacent vertices $u,v\in V(G)$, $\mathrm{rank}(G-\mathnormal{\Delta}(u,v))\leqslant\mathrm{rank}(G)-2$.
\item [{\rm (iv)}] If  $H$ is an induced  subgraph of $G$ with $|H|=|G|-\rho(G)$ and  $\mathrm{rank}(H)<\mathrm{rank}(G)$, then
$\mathrm{rank}(H)\geqslant\mathrm{rank}(G)-2$ and the equality occurs whenever  $H$ is not reduced.
\end{itemize}
\end{lem}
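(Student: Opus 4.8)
The plan is to handle parts (i)--(iii) by a single rank‑counting identity, and then to deduce (iv) from them using the minimality built into the definition of $\rho$. Write $A=A(G)$ and $r=\mathrm{rank}(G)$, fix the set $Z$ of vertices to be deleted, put $Y=V(G)\setminus Z$, and split $A$ into blocks $A_{YY},A_{ZZ},A_{YZ},A_{ZY}$ according to $V(G)=Y\cup Z$, so that $A(G-Z)=A_{YY}$. Let $\pi_Y$ denote restriction of a vector to its $Y$-coordinates, and let $a_u$ be the row of $A$ indexed by $u$. The first step is to record the identity $r-\mathrm{rank}(A_{YY})=k+d$, where $k=\dim\{x\in\mathrm{row}(A):\pi_Y(x)=0\}$ is the dimension of the space of row-space vectors supported on $Z$, and $d=\mathrm{rank}\begin{pmatrix}A_{YY}&A_{YZ}\end{pmatrix}-\mathrm{rank}(A_{YY})\geqslant0$. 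This holds because $r=k+\dim\pi_Y(\mathrm{row}(A))$, while $\pi_Y(\mathrm{row}(A))$ is the row space of the $Y$-columns of $A$, so $\dim\pi_Y(\mathrm{row}(A))=\mathrm{rank}\begin{pmatrix}A_{YY}\\A_{ZY}\end{pmatrix}=\mathrm{rank}\begin{pmatrix}A_{YY}&A_{YZ}\end{pmatrix}$ by symmetry of $A$. Thus the rank drop upon deleting $Z$ equals $k+d$, and the whole task reduces to exhibiting enough independent witness vectors to force $k+d\geqslant 1$ or $\geqslant 2$.

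With this identity, each of (i)--(iii) becomes short. For (ii), take $Z=\mathnormal{\Delta}(u,v)$ with $u,v$ adjacent: the vector $a_u-a_v$ is supported exactly on $\mathnormal{\Delta}(u,v)$ and is nonzero since $G$ reduced forces $N(u)\ne N(v)$, giving $k\geqslant 1$ and a drop of at least $1$. For (iii), with $u,v$ non-adjacent, the same vector gives $k\geqslant 1$; moreover $u,v\in Y$ and their rows in $A_{YY}$ coincide (they agree on all $Y$-coordinates because $Y$ avoids $\mathnormal{\Delta}(u,v)$) while their rows in $\begin{pmatrix}A_{YY}&A_{YZ}\end{pmatrix}$ differ on the $Z$-part, producing a nonzero element of $\mathrm{row}\begin{pmatrix}A_{YY}&A_{YZ}\end{pmatrix}$ that vanishes on $Y$, i.e.\ $d\geqslant 1$; hence the drop is at least $2$. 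For (i), take $Z=N(v)$: the row $a_v$ is the indicator of $N(v)$, nonzero and supported on $Z$, giving $k\geqslant 1$, while $v\in Y$ is isolated in $G-N(v)$, so its row in $A_{YY}$ is zero but its row in $\begin{pmatrix}A_{YY}&A_{YZ}\end{pmatrix}$ is again the nonzero indicator of $N(v)$, forcing $d\geqslant 1$ and a drop of at least $2$.

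For (iv), I would write $H=G-X$ with $|X|=\rho(G)$ and $\mathrm{rank}(H)<r$, so $X$ is a minimum rank-dropping set and is nonempty. The lower bound follows by peeling: for any $x\in X$, minimality gives $\mathrm{rank}(G-(X\setminus\{x\}))=r$, and deleting the single remaining vertex $x$ (one row and one column) lowers the rank by at most $2$, so $\mathrm{rank}(H)\geqslant r-2$. For the equality claim, suppose $H$ is not reduced. If $H$ has an isolated vertex $z$, then $N_G(z)\subseteq X$ is nonempty; by (i) the set $N_G(z)$ drops the rank, so minimality gives $|N_G(z)|\geqslant\rho(G)$, whence $N_G(z)=X$ and $\mathrm{rank}(H)=\mathrm{rank}(G-N_G(z))\leqslant r-2$. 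If instead $H$ has duplicated vertices $p,q$, they are non-adjacent (duplicated vertices can never be adjacent, since $p\sim q$ would put $q\in N(p)\setminus N(q)$), so $\mathnormal{\Delta}(p,q)\subseteq X$ is nonempty; by (iii) it drops the rank, and minimality again yields $\mathnormal{\Delta}(p,q)=X$ and $\mathrm{rank}(H)=\mathrm{rank}(G-\mathnormal{\Delta}(p,q))\leqslant r-2$. In either case $\mathrm{rank}(H)=r-2$.

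I expect the main obstacle to be the ``$-2$'' cases (i) and (iii): the bare projection argument yields only $k\geqslant 1$, and the extra unit must come from the defect term $d$. The conceptual heart is to read $d$ as measuring a degeneracy manufactured inside $A_{YY}$---an isolated vertex for (i), a coincident pair of rows for (iii)---that is invisible to $A_{YY}$ itself but reappears once the deleted columns $A_{YZ}$ are restored, with the reduced hypothesis guaranteeing that the relevant witness vectors are nonzero. In (iv) the delicate point is instead the equality case, where the whole force of the argument is carried by the minimality of $\rho(G)$, which upgrades the a priori inclusions $N_G(z)\subseteq X$ and $\mathnormal{\Delta}(p,q)\subseteq X$ into equalities.
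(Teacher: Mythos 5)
The paper itself offers no proof of this lemma---it is quoted directly from \cite{kot,lov}---so there is no internal argument to compare yours against; what can be said is that your proof is correct and self-contained. The central identity $\mathrm{rank}(G)-\mathrm{rank}(A_{YY})=k+d$ is right: rank--nullity applied to the projection $\pi_Y$ restricted to $\mathrm{row}(A)$ gives $\mathrm{rank}(G)=k+\mathrm{rank}\begin{pmatrix}A_{YY}\\ A_{ZY}\end{pmatrix}$, and symmetry of $A$ turns the stacked matrix into $\begin{pmatrix}A_{YY}&A_{YZ}\end{pmatrix}$, whose rank is $\mathrm{rank}(A_{YY})+d$; moreover $d$ is exactly the dimension of the vectors in $\mathrm{row}\begin{pmatrix}A_{YY}&A_{YZ}\end{pmatrix}$ vanishing on $Y$, which is what your witnesses exploit. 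Those witnesses are sound: in (i) the row $a_v$ (nonzero because a reduced graph has no isolated vertices) gives $k\geqslant1$, while the vanishing of $v$'s row inside $A_{YY}$ against its nonzero $Z$-part gives $d\geqslant1$; in (iii) the vector $a_u-a_v$ (nonzero because a reduced graph has no duplicated vertices) gives $k\geqslant1$, and the coincidence of the $u$- and $v$-rows of $A_{YY}$ contrasted with their difference on the $Z$-columns gives $d\geqslant1$; in (ii) the single unit from $k$ suffices. Part (iv) is also handled correctly: peeling one vertex $x$ off $X$ and using that $|X\setminus\{x\}|<\rho(G)$ preserves the rank shows $\mathrm{rank}(H)\geqslant\mathrm{rank}(G)-2$, and when $H$ is not reduced, the inclusions $N_G(z)\subseteq X$ (isolated vertex) or $\mathnormal{\Delta}_G(p,q)\subseteq X$ (duplicated, hence necessarily non-adjacent, pair) are upgraded to equalities by (i), (iii), and the minimality of $\rho(G)$, which pins the rank at exactly $\mathrm{rank}(G)-2$. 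This is in the same row-space spirit as the arguments in the cited sources, but recast around one clean bookkeeping identity, which makes the four parts fall out uniformly.
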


\begin{cor}\label{lova}
For  any  reduced graph $G$,
$$\rho(G)\leqslant\tau(G)=\min\big\{|\mathnormal{\Delta}(u,v)|\,\big|\,  u \textrm{ and }  v \textrm{ are distinct non-adjacent vertices of } G\big\}.$$
\end{cor}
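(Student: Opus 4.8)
The plan is to prove the two assertions separately: first the equality that identifies $\tau(G)$ with $\min\{|\mathnormal{\Delta}(u,v)|\}$ taken over distinct non-adjacent pairs, and then the inequality $\rho(G)\leqslant\tau(G)$, which will drop out almost for free once the equality is in hand and we invoke Lemma~\ref{lov0}(iii). Throughout I treat $G$ as a reduced graph that is not complete, so that non-adjacent pairs exist and both $\rho(G)$ and $\tau(G)$ are defined (a reduced graph has no isolated vertices, hence at least one edge).

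The combinatorial core is the equality, and I would argue it by characterizing exactly which vertex sets $X$ produce duplicated vertices in $G-X$. The key observation is that duplicated vertices are automatically non-adjacent: if $N(u)=N(v)$ and $u,v$ were adjacent, then $v\in N(u)=N(v)$, contradicting the absence of loops. Now suppose a set $X$ is such that $G-X$ has duplicated vertices $u,v$; then $u,v\notin X$, the pair is non-adjacent, and $N_G(u)\setminus X=N_{G-X}(u)=N_{G-X}(v)=N_G(v)\setminus X$. This last equation forces every vertex lying in exactly one of $N_G(u),N_G(v)$ to belong to $X$, i.e. $\mathnormal{\Delta}(u,v)\subseteq X$, so $|X|\geqslant|\mathnormal{\Delta}(u,v)|\geqslant\min\{|\mathnormal{\Delta}(u,v)|\}$. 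Conversely, for any non-adjacent pair $u,v$, removing exactly $X=\mathnormal{\Delta}(u,v)$ works: since $u,v$ are non-adjacent and loopless, neither lies in $\mathnormal{\Delta}(u,v)$, so both survive, and $N_G(u)\setminus\mathnormal{\Delta}(u,v)=N_G(u)\cap N_G(v)=N_G(v)\setminus\mathnormal{\Delta}(u,v)$, making $u,v$ duplicated in $G-X$. Hence $\tau(G)\leqslant|\mathnormal{\Delta}(u,v)|$ for every such pair. Taking minima in both directions yields the claimed equality.

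For the inequality, I would pick a non-adjacent pair $u,v$ attaining the minimum, so that $|\mathnormal{\Delta}(u,v)|=\tau(G)$. By Lemma~\ref{lov0}(iii), $\mathrm{rank}(G-\mathnormal{\Delta}(u,v))\leqslant\mathrm{rank}(G)-2<\mathrm{rank}(G)$, so deleting these $\tau(G)$ vertices strictly lowers the rank. By the definition of $\rho(G)$ as the minimum size of such a deletion set, this gives $\rho(G)\leqslant\tau(G)$. I expect the only real subtlety to be in the equality rather than the inequality: one must be careful to keep the candidate duplicated pair outside the deletion set $X$ and to use the loopless structure to justify restricting the minimum to non-adjacent pairs. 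The rank estimate doing the heavy lifting in the inequality is supplied directly by Lemma~\ref{lov0}(iii), so that half is routine.
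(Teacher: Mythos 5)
Your proof is correct and follows exactly the route the paper intends: the paper states this as an immediate corollary of Lemma~\ref{lov0} without writing out a proof, the equality being the elementary observation that a deletion set creates a duplicated pair $u,v$ precisely when it contains $\mathnormal{\Delta}(u,v)$, and the inequality $\rho(G)\leqslant\tau(G)$ coming from Lemma~\ref{lov0}(iii) applied to a minimizing non-adjacent pair. Your handling of the side conditions (duplicated vertices are non-adjacent, $u,v\notin\mathnormal{\Delta}(u,v)$, and $G$ non-complete so that $\tau$ is defined) is exactly the bookkeeping the paper leaves to the reader.
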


The following lemma which has   a key role  in our proofs is inspired from \cite{lov}.

\begin{lem}\label{lov}
Let $G$ be a reduced graph  and let $H$ be  an induced  subgraph of $G$ with the  maximum  possible  order subject to that $H$ has duplicated vertices.  Let     $\mathrm{rank}(H)\geqslant\mathrm{rank}(G)-3$. Then the following  properties hold.
\begin{itemize}
\item [{\rm (i)}]   If $c$   is  an isolated vertex of $H$, then $N(c)=V(G-H)$.
\item [{\rm (ii)}] Every    duplication  class of  $H$  has    two elements and $H$ has at most  one isolated vertex.
\item [{\rm (iii)}]   One may label the duplication classes of $H$ as    $\{v_1, v_1'\}, \ldots, \{v_s, v_s'\}$ so that  there exist two  disjoint sets $T_1$ and  $T_2$  such that $V(G-H)=T_1\cup T_2$, $T_1\subseteq  N(v_i)\setminus  N(v_i')$ and  $T_2\subseteq  N(v_i')\setminus  N(v_i)$, for all $i\in\{1, \ldots, s\}$.
\item [{\rm (iv)}]   If both  $T_1$ and $T_2$  are non-empty, then $H$ has no isolated vertex.
\end{itemize}
\end{lem}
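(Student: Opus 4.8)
The plan is to analyze how the vertices of $W:=V(G-H)$ attach to $H$, using that the maximality of $H$ forces every strictly larger induced subgraph to be free of duplicated vertices. First I would record two purely combinatorial consequences of this. If some duplication class had at least three vertices, then adding any single $w\in W$ would leave two of them with a common neighbourhood, so $\langle V(H)\cup\{w\}\rangle$ would still have duplicated vertices while being larger than $H$; this gives the first half of (ii). The same device shows that for every class $\{v_i,v_i'\}$ and every $w\in W$, $w$ is adjacent to exactly one of $v_i,v_i'$. Writing $b_i,b_i'\in\{0,1\}^W$ for the $W$-neighbourhoods of $v_i,v_i'$, this reads $b_i+b_i'=\mathbf 1$, so the difference $d_i:=b_i-b_i'$ has all entries in $\{\pm1\}$ and is nonzero (the latter because $G$ is reduced, so $v_i,v_i'$ already differ outside $H$). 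With this, statement (iii) becomes the single assertion that all $d_i$ are parallel: if $\dim\langle d_1,\dots,d_s\rangle=1$ one relabels each class so that all $d_i$ coincide with one $\pm1$ vector and reads off $T_1,T_2$ as its $+1$- and $-1$-supports.

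The heart of the argument is a rank inequality forcing this dimension down to $1$, and this is where the hypothesis $\mathrm{rank}(H)\ge\mathrm{rank}(G)-3$ enters. Let $U\subseteq\mathbb R^W$ be the span of the $d_i$ together with the vectors $b^c\in\{0,1\}^W$ recording the $G$-neighbourhoods of the isolated vertices $c$ of $H$. Writing $A(G)=\left(\begin{smallmatrix}A_H & B\\ B^{\top} & A_W\end{smallmatrix}\right)$ and $\mathcal N=\ker A_H$, I claim $\mathrm{rank}(G)\ge\mathrm{rank}(H)+2\dim U$. The point is that $U$ contributes to the rank twice, through the symmetry of $A(G)$. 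On one side every generator of $U$ has the form $B^{\top}\gamma$ with $\gamma\in\mathcal N$ (take $\gamma=e_{v_i}-e_{v_i'}$ or $\gamma=e_c$), and projecting the rows of $A(G)$ onto the $H$-coordinates gives $\dim\pi_H(\mathrm{row}\,A(G))=\mathrm{rank}(A_H)+\dim\{B^{\top}\gamma:\gamma\in\mathcal N\}\ge\mathrm{rank}(H)+\dim U$. On the other side each generator, viewed with a zero $H$-part, is itself a vector of $\mathrm{row}\,A(G)$ in the kernel of $\pi_H$ (it is a difference of two rows, or the row of an isolated vertex), so that kernel contributes a further $\dim U$. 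Rank--nullity for $\pi_H$ then adds the two contributions. Together with $\mathrm{rank}(G)-\mathrm{rank}(H)\le3$ this yields $2\dim U\le3$, hence $\dim U=1$ (it is at least $1$ since $H$ has a duplication class).

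Everything else falls out of $\dim U=1$. The $d_i$ all lie in a one-dimensional space and are $\pm1$ vectors, so they are mutually equal up to sign, which is the consistency required for (iii). If $H$ had two isolated vertices $c_1,c_2$, then $b^{c_1},b^{c_2}$ would be distinct nonzero $0/1$ vectors, hence linearly independent, contradicting $\dim U=1$; this is the second half of (ii). If $c$ is an isolated vertex, then $b^c$ is a nonzero $0/1$ multiple of the $\pm1$ vector $d_1$, and a $0/1$ vector can be proportional to a $\pm1$ vector only when the latter has constant sign; this forces one of $T_1,T_2$ empty and $b^c=\mathbf 1$, i.e.\ $N(c)=W$, which is (i). Its contrapositive—an isolated vertex cannot coexist with both $T_1$ and $T_2$ nonempty—is exactly (iv).

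The step I expect to be the main obstacle is the factor $2$ in the rank inequality. Treating the $d_i$ merely as differences of rows yields only $\mathrm{rank}(G)\ge\mathrm{rank}(H)+\dim U$ and hence the useless bound $\dim U\le3$; one genuinely has to exploit the symmetry of $A(G)$ to see the same space $U$ reappear in the $H$-projection of the row space, through the adjacencies of the outside vertices, whose restrictions to each class are forced to be $(1,0)$ or $(0,1)$ and therefore escape $\mathrm{row}\,A_H$. Establishing the identity $\dim\pi_H(\mathrm{row}\,A(G))=\mathrm{rank}(A_H)+\dim\{B^{\top}\gamma:\gamma\in\mathcal N\}$ cleanly, and checking that the two contributions are independent in the rank--nullity split for $\pi_H$, is the delicate part; once it is in place, the combinatorial harvest of (i)--(iv) is routine.
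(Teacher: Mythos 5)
Your argument is correct, but it reorganizes the proof around a different engine than the paper uses. The paper proves (i) by deletion: if some vertex lay outside $V(H)\cup N(c)$, then applying Lemma \ref{lov0}\,(i) to $N(c)$ and Lemma \ref{lov0}\,(iii) to the residual symmetric difference of a duplicated pair would give $\mathrm{rank}(H)\leqslant\mathrm{rank}(G)-4$; it deduces the second half of (ii) from (i); and it proves (iii) and (iv) by displaying, for each possible failure, an explicit principal submatrix of $A(G)$ (two classes split inconsistently by two outside vertices, or one class plus an isolated vertex joined to both sides) that visibly forces $\mathrm{rank}(H)\leqslant\mathrm{rank}(G)-4$. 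Your inequality $\mathrm{rank}(G)\geqslant\mathrm{rank}(H)+2\dim U$ is exactly the general principle of which those two displayed matrices are the $\dim U\geqslant 2$ instances: the vectors $(0,d_i)$ and $(0,b^c)$ lie in $\ker\pi_H\cap\mathrm{row}(A(G))$, and the same space $U$ reappears in $\pi_H(\mathrm{row}(A(G)))$ through symmetry. The step you flag as delicate does close in two lines: since $A_H$ is symmetric, $\mathrm{row}(A_H)=(\ker A_H)^{\perp}$, so $\dim\pi_H(\mathrm{row}(A(G)))=\mathrm{rank}(A_H)+\mathrm{rank}(PB)$ with $P$ the orthogonal projection onto $\ker A_H$, and $\mathrm{rank}(PB)=\mathrm{rank}(B^{\top}P)=\dim B^{\top}(\ker A_H)\geqslant\dim U$ because $P$ is symmetric; rank--nullity for $\pi_H$ restricted to $\mathrm{row}(A(G))$ then adds the two contributions automatically, with no independence left to check. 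What your route buys is uniformity and strength: all four parts, including (i), fall out of the single fact $\dim U=1$, with no appeal to Lemma \ref{lov0}, and you actually prove the sharper statement that $\mathrm{rank}(G)-\mathrm{rank}(H)$ bounds $2\dim U$. What the paper's route buys is concreteness—each contradiction is read directly off a small explicit matrix, with no projection machinery—and economy, since it reuses the deletion lemma it has already established.
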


\begin{proof}{
For (i),  suppose that  $X= V(G)\setminus(V(H)\cup   N(c))$ is  non-empty.
Let $K=G-N(c)$.
If $u$ and  $v$ are duplicated vertices of $H$, then  by the definition of $H$, we find that $V(G-H)=\mathnormal{\Delta}_G(u,v)$ and so $X=\mathnormal{\Delta}_K(u,v)$. Therefore, Lemma \ref{lov0} implies that  $\mathrm{rank}(H)\leqslant\mathrm{rank}(K)-2\leqslant\mathrm{rank}(G)-4$, a contradiction.  For (ii), if $H$ has a  duplication  class containing three distinct vertices $x, y, z$,  then  for every vertex $t\in  V(G-H)$, at least one of
$\mathnormal{\Delta}(x, y)$, $\mathnormal{\Delta}(x, z)$, $\mathnormal{\Delta}(y, z)$
does not contain $t$. This contradicts the maximality of $H$.
The second part of (ii) follows from (i).
For (iii),   note that,   by the definition of $H$, every vertex in   $V(G-H)$ is  adjacent to exactly one vertex in each duplication class. If (iii) does not hold, then
$A(G)$ contains
$$\left[
\begin{array}{c|c}
\begin{array}{c|c|c|c|c}
    &  &  &  &  \\
    &  &  &  &  \\
  \bmi{x} & \bmi{x} & \bmi{y} &  \bmi{y}& \bmi{\star}  \\
    &  &  &  &  \\
    &  &  &  &
 \end{array}
&
\begin{array}{cc} 1&1\\0&0\\1&0\\0&1\\ \star & \star \vspace{-1mm}\end{array}
 \\\hline
 \begin{array}{ccccc} 1\,{} & 0\,{} & 1&{}\,0& {}\,\bmi{\star}\\1\,{} &  0\,{}  & 0&{}\,1& {}\,\bmi{\star}\end{array}
 &
 \begin{array}{cc}0& \star \\ \star&0 \end{array}
\end{array}
\right]$$
as a principle submatrix,
where  the     left-upper corner  is $A(H)$. This directly  yields that $\mathrm{rank}(H)\leqslant\mathrm{rank}(G)-4$, a contradiction.
For (iv), assume that   both  $T_1$ and $T_2$ are non-empty and $H$ has an isolated vertex.   Then, by   (i),
$A(G)$ contains
$$\left[
\begin{array}{c|c}
\begin{array}{c|c|c|c}
 &  &  &   \\
\bmi{x} & \bmi{x} & \bmi{\star} &\bmi{0} \\
 &  &   &  \\
 0&0 &\bmi{0}&0
\end{array}&
\begin{array}{cc} 1&0\\0&1\\ \star & \star\\1&1 \end{array}\\\hline
\begin{array}{cccc} 1{}\, & 0 & \bmi{\star} &{}\,1\\0{}\, &  1  & \bmi{\star}  &{}\,1
\end{array}&
\begin{array}{cc}
0&\star\\
\star&0 \end{array}
\end{array}
\right]$$ as a principle submatrix,
where  again the      left-upper corner   is $A(H)$. This directly  implies   that $\mathrm{rank}(H)\leqslant\mathrm{rank}(G)-4$, a contradiction.
}\end{proof}

Notice that for every integer
$r\geqslant4$, we have $m(r)=2m(r-2)+2$. Using this equality, we can prove the following lemma which will be  frequently used  in the  sequel.

\begin{lem}\label{rank}
Let $r$ and $k$ be two positive integers.
\begin{itemize}
\item [{\rm (i)}]  If  $r\geqslant6$ and $3\leqslant k\leqslant r-3$, then $m(k)+m(r-k)\leqslant m(r-2)+1$.
\item [{\rm (ii)}]  If  $r\geqslant10$ and $4\leqslant k\leqslant r-3$, then $m(k)+m(r-k+1)\leqslant m(r-2)$.
\end{itemize}
\end{lem}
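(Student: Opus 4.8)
The plan is to prove both inequalities by induction on $r$, exploiting the recurrence $m(r)=2m(r-2)+2$ noted just above the statement. I will use throughout that $m$ is pinned down by this recurrence together with $m(2)=2$ and $m(3)=3$, so the finitely many small cases can be read off from the closed form directly.

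For part (i), I first observe that both the quantity $m(k)+m(r-k)$ and the constraint $3\leqslant k\leqslant r-3$ are invariant under $k\mapsto r-k$, so I may assume $k\leqslant r-k$; in the inductive range $r\geqslant 9$ this forces the larger index to satisfy $r-k\geqslant 4$. The idea is to peel one factor of $2$ off the larger summand: the recurrence gives $m(r-k)=2m(r-k-2)+2$, whence $m(k)+m(r-k)=m(k)+2m(r-k-2)+2$. Since $r-k-2=(r-2)-k$, the sum $m(k)+m(r-k-2)$ is precisely an instance of the statement for $r-2$, and provided $3\leqslant k\leqslant r-5$ the induction hypothesis yields $m(k)+m(r-k-2)\leqslant m(r-4)+1$. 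Substituting $m(r-k-2)\leqslant m(r-4)+1-m(k)$ and using $m(r-2)=2m(r-4)+2$, all but the $m(k)$ term cancels and the target collapses to the trivial inequality $m(k)\geqslant 1$. The condition $k\leqslant r-5$ that makes the hypothesis applicable holds for every $k\leqslant r/2$ once $r\geqslant 9$, so the step goes through there; the base cases $r\in\{6,7,8\}$ — in particular the genuinely interior pair $r=8$, $k=4$, which the reduction misses — are checked from the closed form.

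Part (ii) follows the same scheme and is slightly cleaner, as there is no boundary value to treat separately. The two arguments $k$ and $r-k+1$ sum to $r+1$, and $m(k)+m(r-k+1)$ together with $4\leqslant k\leqslant r-3$ is symmetric under $k\mapsto r-k+1$; I may therefore assume $k\leqslant r-k+1$, so that $b:=r-k+1$ is the larger index. Expanding $m(b)=2m(b-2)+2$ and recognizing $m(k)+m(b-2)=m(k)+m((r-2)-k+1)$ as an instance of the statement for $r-2$, the induction hypothesis $m(k)+m(b-2)\leqslant m(r-4)$ combined with $m(r-2)=2m(r-4)+2$ reduces the bound to $m(r-2)-m(k)\leqslant m(r-2)$, i.e. $m(k)\geqslant 0$. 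Here the hypothesis needs $4\leqslant k\leqslant r-5$ and $r-2\geqslant 10$, both of which hold for $k\leqslant(r+1)/2$ once $r\geqslant 12$, leaving $r\in\{10,11\}$ as base cases.

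The main obstacle is not any single estimate: once the larger summand is expanded via the recurrence, the induction hypothesis makes the cross terms cancel and only a trivial inequality survives. The delicate point is the bookkeeping of ranges. One cannot expand both summands at once without dropping $r$ by $4$ and shrinking the admissible interval for $k$, so I expand only the larger summand and must verify that the reduced index $k$ still lies in the interval to which the induction hypothesis applies. This requirement fails exactly for the smallest values of $r$ (and, in part (i), for the lone interior pair at $r=8$), which is precisely why a handful of base cases must be confirmed by hand from the closed-form expression for $m$.
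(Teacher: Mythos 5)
Your proof is correct, but it follows a genuinely different route from the paper's. The paper's proof of (i) expands \emph{both} summands via $m(j)=2m(j-2)+2$, so the induction descends from $(r,k)$ to $(r-4,k-2)$, and the boundary values $k\in\{3,4,r-4,r-3\}$ serve as base cases uniformly in $r$; you instead symmetrize ($k\mapsto r-k$), peel the recurrence off only the larger summand, and descend from $(r,k)$ to $(r-2,k)$, paying with base cases at $r\in\{6,7,8\}$ (you correctly flag the interior pair $(r,k)=(8,4)$ that your reduction misses). The bigger structural difference is in (ii): the paper does \emph{not} run a second induction at all — it expands both summands and invokes part (i) at rank $r-3$, making (ii) a two-line corollary of (i) via $m(k)+m(r-k+1)=2\bigl(m(k-2)+m\bigl((r-3)-(k-2)\bigr)\bigr)+4\leqslant 2m(r-5)+6=m(r-3)+4\leqslant m(r-2)$ — whereas you prove (ii) by an independent induction that never uses (i). What each approach buys: the paper's is shorter overall and avoids per-$r$ base-case checks, since its $k$-boundary cases are uniform in $r$; yours is more systematic in that a single symmetrize-and-peel mechanism handles both parts and each inductive step collapses to a triviality ($m(k)\geqslant 1$ for (i), $m(k)\geqslant 0$ for (ii)), at the cost of the finite verifications at $r\in\{6,7,8\}$ and $r\in\{10,11\}$, all of which do check out against the closed form for $m$.
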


\noindent{\bf{\textsf{Proof.}}}
For (i), we prove the statement by induction on $r$. For $r\in\{6,7, 8, 9\}$, (i) can be  easily verified. If   $k\in\{3, 4, r-4, r-3\}$,  then the inequality in (i) is clearly true.  For  $5\leqslant k\leqslant r-5$, by the induction hypothesis,  we have
\begin{align*}
m(k)+m(r-k)&=2m(k-2)+2m\big(r-4-(k-2)\big)+4\\&\leqslant2m(r-6)+6\\&=m(r-4)+4\\&<m(r-2)+1.
\end{align*}
For (ii), note that if  $k\in\{4,  r-3\}$,  then the inequality  is clearly valid.  If   $5\leqslant k\leqslant r-4$, then  using  (i),  we have
\begin{align*}
m(k)+m(r-k+1)&=2m(k-2)+2m\big(r-3-(k-2)\big)+4\\&\leqslant2m(r-5)+6\\&=m(r-3)+4\\&\leqslant m(r-2).
\end{align*}

${}$

\vspace{-17mm}

\hfill{$\Box$}

\vspace{3mm}

\section{Spherical codes}

In this section, we recall  some results on spherical codes. Let $n$ be a positive integer and $\varphi\in(0, \pi]$.
An {\sl $(n,M,\varphi)$-spherical code} $\mathscr{C}$ is a set of $M$ unit vectors in  $\mathbb{R}^n$  for which
$ \cos^{-1}(\bmi{\langle x}, \bmi{y\rangle})\geqslant\varphi$
 for every pair  $\bmi{x},\bmi{y}\in\mathscr{C}$, where $\bmi{\langle}\,\,, \,\,\bmi{\rangle}$  indicates the inner product of two vectors.
Let $\text{\sl{M}}(n, \varphi)$ denote the maximum possible value  $M$
for given $n$ and  $\varphi$ such that  an $(n,M,\varphi)$-spherical code exists.
We proceed to verify  the following lemma which is essential in the proof of our main theorem.

\begin{lem}\label{phius}
For every integer  $n\geqslant47$, $\text{\sl{M}}\big(n, \cos^{-1}(\sqrt{2}-1)\big)<5\cdot2^{(n-4)/2}-2$.
\end{lem}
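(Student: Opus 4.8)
Write $s=\cos\varphi=\sqrt2-1$, so the task is to bound the maximal number $M=\text{\sl{M}}(n,\varphi)$ of unit vectors $\bmi{x}_1,\dots,\bmi{x}_M\in\mathbb{R}^n$ with $\langle\bmi{x}_i,\bmi{x}_j\rangle\leqslant s$ for $i\neq j$. The first thing I would record is that the naive approach---packing spherical caps of angular radius $\varphi/2$ around the code vectors and dividing the area of $S^{n-1}$ by the area of one cap---is \emph{not} good enough: its growth rate is $-\log_2\sin(\varphi/2)\approx0.89$ per dimension, which exceeds the target rate $1/2$. Since $5\cdot2^{(n-4)/2}-2=\tfrac54\,2^{n/2}-2$, the point of the proof is that this target is a rather loose consequence of a genuine linear-programming estimate once $n$ is large, the underlying rate being strictly below $1/2$.

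The plan is to apply the Delsarte--Kabatiansky--Levenshtein bound. Recall its shape: if $f(t)=\sum_{k\geqslant0}f_kG_k^{(n)}(t)$ is expanded in the Gegenbauer polynomials for $\mathbb{R}^n$, normalized by $G_k^{(n)}(1)=1$, and satisfies $f_0>0$, $f_k\geqslant0$ for $k\geqslant1$, and $f(t)\leqslant0$ for all $t\in[-1,s]$, then $M\leqslant f(1)/f_0$. A quick check shows that no \emph{fixed}-degree polynomial can be used here: for $f(t)=(t-s)(t+1)=t^2+(1-s)t-s$ one computes, using $t^2=\tfrac1nG_0^{(n)}+\tfrac{n-1}{n}G_2^{(n)}$, that $f_0=\tfrac1n-s$, which is negative as soon as $n\geqslant3$. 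This reflects that for our $s$ we sit in the exponential regime and must take $\deg f$ proportional to $n$. Choosing the Levenshtein-type polynomial adapted to the threshold $s$ and optimizing its free node yields the Kabatiansky--Levenshtein exponential estimate, whose rate at $\varphi=\cos^{-1}(\sqrt2-1)$ (here $\sin\varphi=\sqrt{1-s^2}$, so $\varphi\approx65.5^\circ$) evaluates to roughly $0.287$ per dimension, comfortably below $1/2$.

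Granting such an estimate $\text{\sl{M}}(n,\varphi)\leqslant\beta(n)$ with $\limsup_n\tfrac1n\log_2\beta(n)<\tfrac12$, the conclusion becomes a comparison: because the exponential rate of $\beta(n)$ is strictly smaller than that of $\tfrac54\,2^{n/2}-2$, the latter eventually dominates, and one only has to locate the crossover. I expect the main obstacle to be exactly this quantitative last step. The asymptotic rate inequality $0.287<0.5$ guarantees immediately that \emph{some} threshold $n_0$ works, but certifying that $n_0=47$ is admissible requires keeping explicit track of the polynomial pre-factors and lower-order terms hidden in $\beta(n)$---or, equivalently, using a closed-form version of the Levenshtein bound valid for every $n\geqslant47$---and then verifying the resulting elementary inequality $\beta(n)<5\cdot2^{(n-4)/2}-2$ by hand. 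Given the wide gap between the two rates, a fairly crude explicit form of the bound should already suffice, so the difficulty is bookkeeping rather than conceptual.
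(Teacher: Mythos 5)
Your diagnosis of the landscape is accurate: naive cap packing fails (rate $\approx 0.89$ per dimension), a fixed-degree LP polynomial fails (your computation $f_0=\tfrac1n-s<0$ for $s=\sqrt2-1>\tfrac13$ is correct), and one needs a bound in the exponential regime whose rate is below $\tfrac12$; linear programming bounds are indeed the right tool, and the paper itself uses the Leven\v{s}te\u{\i}n bound. But your argument has a genuine gap, and it sits exactly where the content of the lemma lies. The statement is not ``the inequality holds for all sufficiently large $n$''; it is ``the inequality holds for every $n\geqslant47$,'' and that explicit threshold is what the paper's main theorem (a minimal counterexample would have rank at most $46$) rests on. A rate comparison ($0.287<0.5$) yields only an unspecified $n_0$. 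You never produce an explicit bound $\beta(n)$ with all prefactors, and you never carry out the verification down to $n=47$; you defer this as ``bookkeeping,'' but that bookkeeping \emph{is} the proof --- without it, nothing about $47$ has been established.

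For comparison, the paper splits the range in two. For $n\geqslant118$ it does not use Kabatiansky--Leven\v{s}te\u{\i}n at all, but Rankin's bound (Theorem 5) combined with an explicit evaluation of the integral occurring there (taken from Zong's book), giving the fully explicit estimate $\text{\sl{M}}(n,\varphi_{_0})<(n^2-1)\left(1+\tfrac{1}{\sqrt2}\right)^{n/2}$ --- rate $\approx0.386$ per dimension, weaker than the KL rate $\approx0.287$, but available with all constants at essentially no cost; and even this explicit bound only beats $5\cdot2^{(n-4)/2}-2$ from $n\geqslant118$ onward (at $n=47$ it is off by a factor of roughly $40$). For the remaining window $47\leqslant n\leqslant118$ the paper applies the Leven\v{s}te\u{\i}n bound (Theorem 6) and checks the inequality by machine computation (Maple) for each $n$ in that range. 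This window is precisely where your ``fairly crude explicit form'' would have to work, and the fact that the paper needed both the sharper LP bound and computer verification there indicates the margin is not generous enough for crude estimates. To repair your proposal you would need either an explicit closed-form bound valid for all $n\geqslant47$ together with an elementary verification of the resulting inequality, or exactly the paper's two-regime strategy: an explicit bound for large $n$ plus a finite (in practice computer-assisted) check for the window near $47$.
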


The following theorem is due to Rankin \cite{Ran}.

\begin{thm}\label{rankin}
Let $n$ be a positive integer and $\varphi\in(0, \pi]$. Then $\text{\sl{M}}(n, \tfrac{\pi}{2})=2n$ and
\begin{eqnarray*}
\text{\sl{M}}(n, \varphi)\leqslant\left\{\begin{array}{lcc}
  n+1 & \,\,\,\text{ if } \varphi>\tfrac{\pi}{2}, \\ & \vspace{-2mm}\\
  \displaystyle{\frac{\sqrt{\pi}\mathnormal{\Gamma}\hspace{-1mm}\left(\tfrac{n-1}{2}\right)\sin\alpha\tan\alpha}{2\mathnormal{\Gamma}\hspace{-1mm}\left(\tfrac{n}{2}\right)\displaystyle{\int_{0}^{\alpha} (\sin\theta)^{n-2}(\cos\theta-\cos\alpha)\text{\sl{d}}\theta}}} & \text{ if } \varphi<\tfrac{\pi}{2},\\
\end{array}\right.
\end{eqnarray*}
where  $\alpha =\sin^{-1}\hspace{-1mm}\left(\sqrt{2}\sin\tfrac{\varphi}{2}\right)$ and $\mathnormal{\Gamma}$ denotes the Gamma function.
\end{thm}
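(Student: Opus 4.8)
The plan is to treat the three regimes $\varphi=\tfrac\pi2$, $\varphi>\tfrac\pi2$, and $\varphi<\tfrac\pi2$ separately, after translating the code condition $\cos^{-1}(\langle\bmi{x},\bmi{y}\rangle)\geqslant\varphi$ into the inner-product condition $\langle\bmi{x},\bmi{y}\rangle\leqslant\cos\varphi$. First I would record the identity behind the parameter $\alpha$: since $\cos\varphi=1-2\sin^2\tfrac\varphi2$, the angle $\alpha=\sin^{-1}(\sqrt2\sin\tfrac\varphi2)$ satisfies $\sin^2\alpha=1-\cos\varphi$, hence $\cos^2\alpha=\cos\varphi$ in the acute regime. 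This relation $\cos^2\alpha=\cos\varphi$ is the geometric hinge of the whole argument.

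For $\varphi=\tfrac\pi2$ the lower bound $2n$ is realized by $\pm\bmi{e}_1,\dots,\pm\bmi{e}_n$, and for the upper bound I would prove by induction on $n$ that any family of nonzero vectors in $\mathbb{R}^n$ with pairwise nonpositive inner products has at most $2n$ members: projecting $\bmi{x}_1,\dots,\bmi{x}_{M-1}$ onto $\bmi{x}_M^{\perp}$ via $\bmi{x}_i=\langle\bmi{x}_i,\bmi{x}_M\rangle\bmi{x}_M+\bmi{w}_i$ preserves nonpositivity of the off-diagonal products, at most one projection vanishes (only $-\bmi{x}_M$ projects to $\bmi{0}$), and the induction hypothesis in dimension $n-1$ gives $M-2\leqslant2(n-1)$. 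For $\varphi>\tfrac\pi2$ the products are strictly negative, and I would show $\bmi{x}_1,\dots,\bmi{x}_{M-1}$ are linearly independent: a nontrivial dependence splits into a positive and a negative part whose common value has squared norm equal to a sum of strictly negative terms, so one part must vanish, leaving $\sum_{i\in P}c_i\bmi{x}_i=\bmi{0}$ with all $c_i>0$; pairing with $\bmi{x}_M$ then yields a sum of strictly negative numbers equal to zero, a contradiction. Hence $M-1\leqslant n$.

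The acute case $\varphi<\tfrac\pi2$ is the crux, and it is here I would spend the effort via an averaging argument over spherical caps of angular radius $\alpha$. For a fixed pole $\bmi{u}\in S^{n-1}$, consider the code points in the cap $\{\bmi{x}:\langle\bmi{x},\bmi{u}\rangle\geqslant\cos\alpha\}$; writing such a point as $\bmi{x}_i=c_i\bmi{u}+\bmi{v}_i$ with $\bmi{v}_i\perp\bmi{u}$ and $c_i\geqslant\cos\alpha>0$, the hinge relation gives $\langle\bmi{v}_i,\bmi{v}_j\rangle=\langle\bmi{x}_i,\bmi{x}_j\rangle-c_ic_j\leqslant\cos^2\alpha-c_ic_j\leqslant0$, the last step because $c_ic_j\geqslant\cos^2\alpha$. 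Expanding $0\leqslant\|\sum_i\bmi{v}_i\|^2$ and using $c_i\geqslant\cos\alpha$ then yields, with $S=\sum_ic_i$ and $k$ the number of points in the cap, the inequality $S^2\leqslant k+k(k-1)\cos^2\alpha$, whence
\begin{align*}
\sum_i(c_i-\cos\alpha)=S-k\cos\alpha &\leqslant \frac{k(1-\cos^2\alpha)}{\sqrt{k+k(k-1)\cos^2\alpha}+k\cos\alpha}\\
&< \frac{\sin\alpha\tan\alpha}{2},
\end{align*}
the final bound being the supremum of the right-hand side as $k\to\infty$. This is the sharp local estimate. I would then integrate the weighted cap-count $\sum_i(\langle\bmi{x}_i,\bmi{u}\rangle-\cos\alpha)$ against the uniform surface measure on $S^{n-1}$: by rotational symmetry the integral equals $M\,\omega_{n-2}\int_0^\alpha\sin^{n-2}\theta(\cos\theta-\cos\alpha)\,\mathrm{d}\theta$, where $\omega_m$ denotes the surface area of $S^{m}$, while the pointwise bound caps it by $\tfrac12\sin\alpha\tan\alpha\cdot\omega_{n-1}$. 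Dividing and using $\omega_{n-1}/\omega_{n-2}=\sqrt\pi\,\Gamma(\tfrac{n-1}2)/\Gamma(\tfrac n2)$ produces exactly the stated bound.

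The main obstacle is this acute case, and within it the delicate point is the sharp local estimate: one must verify that the elementary inequality coming from $\|\sum_i\bmi{v}_i\|^2\geqslant0$ together with $c_i\geqslant\cos\alpha$ is governed by the correct limiting constant $\tfrac12\sin\alpha\tan\alpha$, i.e. that the displayed right-hand side is monotone in $k$ and approaches this value. The remaining ingredients—the two elementary regimes and the integral-geometric bookkeeping that converts the local estimate into the Gamma-function expression—are routine by comparison.
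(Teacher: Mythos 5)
The paper contains no proof of this theorem to compare against: it is quoted as known, attributed to Rankin \cite{Ran}, and only the subsequent integral estimates (from Zong and Ericson--Zinoviev) are worked out in the text. Taken on its own, your blind proof is correct, and it is in substance a reconstruction of the classical averaging proof of Rankin's bound. I checked the details: your hinge identity $\cos\varphi=\cos^2\alpha$ is right; the $\varphi=\tfrac{\pi}{2}$ induction works because projecting onto $x_M^{\perp}$ preserves nonpositive inner products ($\langle w_i,w_j\rangle=\langle x_i,x_j\rangle-\langle x_i,x_M\rangle\langle x_j,x_M\rangle\leqslant 0$) and only $-x_M$ can project to zero among distinct unit vectors; the obtuse case is the standard linear-independence argument giving $M-1\leqslant n$; and in the acute case, expanding $0\leqslant\|\sum_i v_i\|^2$ with $\|v_i\|^2=1-c_i^2$ and $\langle v_i,v_j\rangle\leqslant\cos^2\alpha-c_ic_j$ does give $S^2\leqslant k+k(k-1)\cos^2\alpha$. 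Your ``delicate'' monotonicity point is easily settled: dividing numerator and denominator by $k$ rewrites your quotient as $\sin^2\alpha\big/\big(\sqrt{\cos^2\alpha+\sin^2\alpha/k}+\cos\alpha\big)$, which is visibly increasing in $k$ with limit $\tfrac{1}{2}\sin\alpha\tan\alpha$, so the local estimate holds strictly for every cap. The zonal decomposition $\int_{S^{n-1}}(\langle x,u\rangle-\cos\alpha)_+\,d\sigma(u)=\omega_{n-2}\int_0^\alpha(\cos\theta-\cos\alpha)(\sin\theta)^{n-2}\,d\theta$ and the ratio $\omega_{n-1}/\omega_{n-2}=\sqrt{\pi}\,\Gamma(\tfrac{n-1}{2})/\Gamma(\tfrac{n}{2})$ then yield exactly the stated expression. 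Two pedantic caveats for a final write-up: the acute-case integration presumes $n\geqslant 2$ (for $n=1$ the right-hand side involves $\Gamma(0)$ and the claim is degenerate; the paper only invokes the bound for large $n$), and the $n=1$ base case of the right-angle induction should be stated. What your approach buys relative to the paper is self-containedness: the article treats the theorem as a black box, whereas you supply the complete argument it rests on.
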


From  \cite[p.\,97]{Zong}, we have
\begin{equation}\label{zon}
\int_{0}^{\alpha} (\sin\theta)^{n-2}(\cos\theta-\cos\alpha)\text{\sl{d}}\theta=\frac{(\sin\alpha)^{n+1}}{(n^2-1)\cos^2\alpha}\left(1-\frac{3\xi\tan^2\alpha}{n+3}\right),
\end{equation}
for some  $\xi\in[0,   1]$.
If    $n>\max\{6\tan^2\alpha-3, 5\}$, then $1-\tfrac{3\xi\tan^2\alpha}{n+3}>\tfrac{1}{2}$ and since $\mathnormal{\Gamma}$ is increasing on $[2, +\infty)$,   Theorem \ref{rankin} and (\ref{zon}) yield that
\begin{equation} \label{bdmn}
\text{\sl{M}}(n, \varphi)<\frac{n^2-1}{(\sin\alpha)^n}=(n^2-1)\left(\sqrt{2}\sin\tfrac{\varphi}{2}\right)^{-n}.
\end{equation}
Let $\varphi_{_0}=\cos^{-1}(\sqrt{2}-1)$. Then, by (\ref{bdmn}), we obtain that
$$\displaystyle{\text{\sl{M}}(n, \varphi_{_0})<(n^2-1)\left(1+\frac{1}{\sqrt2}\right)^{\tfrac{n}{2}}},$$
for every integer   $n>5$. So, it  is now easily  checked that $\text{\sl{M}}(n, \varphi_{_0})<5\cdot2^{(n-4)/2}-2$, for  every integer   $n\geqslant118$.

For smaller values of $n$,  we have  to employ   another upper bound for $\text{\sl{M}}(n, \varphi)$  given by  Leven\v{s}te\u{\i}n.
To present  the  Leven\v{s}te\u{\i}n bound, we first  recall that  the {\sl Gegenbauer polynomials} $Q_0(t), Q_1(t), \ldots$ which are defined by the recurrence relation
\begin{eqnarray*}
\left\{\begin{array}{l}
Q_0(t)=1;\\ Q_1(t)=t;\\
\displaystyle{Q_{k+1}(t)=\frac{(2k+n-2)tQ_{k}(t)-kQ_{k-1}(t)}{k+n-2}},  \text { for all }  k\geqslant1.
\end{array}\right.
\end{eqnarray*}
Now, let
$$Q_k^{1,0}(t)=\frac{(n-1)\big(Q_k(t)-Q_{k+1}(t)\big)}{(2k+n-1)(1-t)}\quad\text{and}\quad
Q_k^{1,1}(t)=\frac{(n-1)\big(Q_k(t)-Q_{k+2}(t)\big)}{(2k+n)(1-t^2)}.$$
For every integer  $k\geqslant1$, denote by  $t_k^{1,0}$ and $t_k^{1,1}$  the largest  zeros  of  $Q_k^{1,0}(t)$ and  $Q_k^{1,1}(t)$, respectively, and let $t_0^{1,1}=-1$.
We know from \cite[p.\,51]{Eric} that  $t_{k-1}^{1,1}<t_k^{1,0}<t_k^{1,1}$, for  every integer  $k\geqslant1$, and     $\{[t_{k-1}^{1,1}, t_k^{1,1})\,|\, k\geqslant1\}$ is a partition of $[-1, 1)$.
The following theorem is  called the Leven\v{s}te\u{\i}n bound \cite[p.\,57]{Eric}.

\begin{thm}\label{Levbd}
Let $n\geqslant3$  and $\varphi\in(0, \pi]$. Then
\begin{eqnarray*}
\text{\sl{M}}(n, \varphi)\leqslant\left\{\begin{array}{lcc}
\displaystyle{{k+n-3 \choose k-1}\left(\frac{2k+n-3}{n-1}-\frac{Q_{k-1}(s)-Q_{k}(s)}{(1-s)Q_k(s)}\right)} & \,\,\,\text{ if } s\in\left[t_{k-1}^{1,1}, t_k^{1,0}\right), \\ & \\
\displaystyle{{k+n-2 \choose k}\left(\frac{2k+n-1}{n-1}-\frac{(1+s)(Q_{k}(s)-Q_{k+1}(s)}{(1-s)(Q_k(s)+Q_{k+1}(s)}\right)} & \text{ if } s\in\left[t_k^{1,0}, t_k^{1,1}\right),\\
\end{array}\right.
\end{eqnarray*}
where  $s=\cos\varphi$.
\end{thm}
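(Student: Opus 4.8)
The plan is to derive both inequalities from the Delsarte linear programming method for spherical codes, producing in each case an explicit feasible polynomial built from the adjacent orthogonal systems $\{Q_j^{1,0}\}$ and $\{Q_j^{1,1}\}$. The foundation is the following feasibility bound, which I would establish first. Suppose $f(t)=\sum_{j\geqslant0}f_jQ_j(t)$ is a real polynomial with $f_j\geqslant0$ for every $j\geqslant1$, with $f_0>0$, and with $f(t)\leqslant0$ for all $t\in[-1,s]$, where $s=\cos\varphi$. Then $\text{\sl{M}}(n,\varphi)\leqslant f(1)/f_0$. To prove this, take an $(n,M,\varphi)$-spherical code $\{\bmi{x}_1,\ldots,\bmi{x}_M\}$ and evaluate the double sum $\sum_{i,j}f(\langle\bmi{x}_i,\bmi{x}_j\rangle)$ in two ways. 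By Schoenberg's theorem the Gegenbauer polynomials are positive definite on the unit sphere of $\mathbb{R}^n$, so $\sum_{i,j}Q_k(\langle\bmi{x}_i,\bmi{x}_j\rangle)\geqslant0$ for every $k\geqslant1$; hence the double sum is at least $f_0M^2$. On the other hand, the $M$ diagonal terms contribute $Mf(1)$, while every off-diagonal term is nonpositive because the inner products are at most $s$, so the double sum is at most $Mf(1)$. Comparing the two estimates yields $M\leqslant f(1)/f_0$.

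The heart of the argument is to exhibit, on each of the two intervals for $s$, a polynomial realizing the stated value. Writing $K_{k-1}^{1,0}(t,s)$ and $K_{k-1}^{1,1}(t,s)$ for the Christoffel--Darboux reproducing kernels of the systems orthogonal with respect to the weights $(1-t)(1-t^2)^{(n-3)/2}$ and $(1-t^2)(1-t^2)^{(n-3)/2}$, formed from the adjacent polynomials $Q_j^{1,0}$ and $Q_j^{1,1}$ defined above, I would take
$$f(t)=(t-s)\big(K_{k-1}^{1,0}(t,s)\big)^2\qquad\text{when }\ s\in\big[t_{k-1}^{1,1},t_k^{1,0}\big),$$
a polynomial of degree $2k-1$, and
$$f(t)=(t+1)(t-s)\big(K_{k-1}^{1,1}(t,s)\big)^2\qquad\text{when }\ s\in\big[t_k^{1,0},t_k^{1,1}\big),$$
a polynomial of degree $2k$. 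In both cases the sign condition $f(t)\leqslant0$ on $[-1,s]$ is immediate, since the kernel enters squared, the factor $t-s$ is nonpositive there, and in the even case $t+1\geqslant0$.

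The main obstacle is to verify that every Gegenbauer coefficient $f_j$ with $j\geqslant1$ is nonnegative and that $f_0>0$; this is precisely where the hypotheses locating $s$ between the consecutive extreme zeros $t_{k-1}^{1,1},t_k^{1,0},t_k^{1,1}$ are used. I would proceed structurally: apply the Christoffel--Darboux identity to rewrite $(t-s)K_{k-1}^{1,0}(t,s)$ as a two-term combination of $Q_{k-1}^{1,0}(t)$ and $Q_k^{1,0}(t)$, factor $f$ as the product of this combination with the kernel, and then return to the Gegenbauer basis through the connection relations between $\{Q_j^{1,0}\}$, $\{Q_j^{1,1}\}$ and $\{Q_j\}$ together with the classical nonnegativity of the linearization coefficients of products of Gegenbauer polynomials. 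The placement of $s$ in the prescribed subinterval fixes the signs of $Q_k(s)$ and $Q_{k+1}(s)$ and of the kernel at $s$, which is exactly what forces the resulting expansion to be manifestly nonnegative; this is the technical core of Levenshtein's theory, and the same positivity underlies the Gauss--Radau and Gauss--Lobatto quadratures attached to the two adjacent systems, a viewpoint one may equally adopt to complete the verification.

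Finally, to identify $f(1)/f_0$ with the two closed forms I would compute the numerator and denominator separately. The value $f(1)$ is read off directly from the kernel value at $t=1$. For $f_0$, which equals $\big(\int_{-1}^{1}f(t)(1-t^2)^{(n-3)/2}\,\text{\sl{d}}t\big)/\int_{-1}^{1}(1-t^2)^{(n-3)/2}\,\text{\sl{d}}t$, I would split the measure, writing in the odd case $(t-s)(1-t^2)^{(n-3)/2}=-(1-t)(1-t^2)^{(n-3)/2}+(1-s)(1-t^2)^{(n-3)/2}$ and invoking the reproducing property $\int_{-1}^{1}K_{k-1}^{1,0}(t,s)^2(1-t)(1-t^2)^{(n-3)/2}\,\text{\sl{d}}t=K_{k-1}^{1,0}(s,s)$, with the analogous split for the even bound. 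Simplifying the ratio by means of the three-term recurrences relating $Q_k$, $Q_k^{1,0}$ and $Q_k^{1,1}$, together with the explicit values $Q_j^{1,0}(1)$ and $Q_j^{1,1}(1)$, reproduces the two binomial expressions in the statement.
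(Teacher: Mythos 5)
The paper offers no proof of this theorem: it is quoted as a known result from Ericson and Zinoviev \cite[p.\,57]{Eric}, so there is no internal argument to compare against. Measured against the standard proof in that source, your proposal reconstructs exactly the right skeleton: the Delsarte linear programming lemma (your double-counting proof of $M\leqslant f(1)/f_0$ via positive definiteness of the $Q_k$ is correct), Levenshtein's choice of feasible polynomials $f(t)=(t-s)\big(K_{k-1}^{1,0}(t,s)\big)^2$ of degree $2k-1$ and $f(t)=(t+1)(t-s)\big(K_{k-1}^{1,1}(t,s)\big)^2$ of degree $2k$, the correct adjacent weights $(1-t)(1-t^2)^{(n-3)/2}$ and $(1-t^2)^{(n-1)/2}$, the immediate sign condition on $[-1,s]$, and the right plan for evaluating $f(1)/f_0$ through the reproducing property and the values $Q_j^{1,0}(1)$, $Q_j^{1,1}(1)$.

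The genuine gap is the step you yourself flag as ``the technical core'': the verification that every Gegenbauer coefficient $f_j$ ($j\geqslant1$) is nonnegative and $f_0>0$ precisely when $s$ lies in $\big[t_{k-1}^{1,1},t_k^{1,0}\big)$, respectively $\big[t_k^{1,0},t_k^{1,1}\big)$. You name the correct ingredients --- the Christoffel--Darboux identity reducing $(t-s)K_{k-1}^{1,0}(t,s)$ to a two-term combination of $Q_{k-1}^{1,0}$ and $Q_k^{1,0}$, nonnegative connection coefficients between the adjacent systems and $\{Q_j\}$, and nonnegative linearization of Gegenbauer products --- but you never execute the sign analysis that ties the hypothesis on $s$ to the conclusion: one must show that the location of $s$ relative to the extreme zeros controls the signs of the two Christoffel--Darboux coefficients (equivalently of $Q_k^{1,0}(s)$, $Q_k^{1,1}(s)$ and the kernels at $s$) so that the resulting expansion is term-by-term nonnegative, and separately that $f_0>0$, which is not automatic from the splitting of the measure and in fact fails outside the stated interval. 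Since the two closed-form expressions in the statement are exactly $f(1)/f_0$ for these polynomials, and since the interval hypotheses enter the proof only through this positivity verification, deferring it means the argument as written establishes feasibility of the sign condition but not membership of $f$ in the LP cone; essentially all of the difficulty of Levenshtein's theorem resides in that omitted verification, so what you have is an accurate and well-organized outline of the known proof rather than a complete proof.
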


By  Theorem \ref{Levbd} and using   $\mathsf{Maple}$ for  computations,  we find that $\text{\sl{M}}(n, \varphi_{_0})<5\cdot2^{(n-4)/2}-2$, for every integer   $47\leqslant n\leqslant118$.
This discussion  completes the proof of Lemma \ref{phius}.

\section{Main Results}

In this section,  we present our main results.
We remark that   Conjecture \ref{ackconj} was verified for all graphs of  rank at most 8 by computation  \cite{akb}.
We have extended this  result to all graphs  of rank 9 by a computer search.

\begin{lem}\label{a1} Let $G$ be a reduced graph of order $n$ and rank $r\geqslant46$. If $n\geqslant5\cdot2^{(r-3)/2}-2$, then $\rho(G)<\big(1-\tfrac{1}{\sqrt{2}}\big)n$.
\end{lem}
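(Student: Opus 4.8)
The plan is to connect the combinatorial quantity $\rho(G)$ to a spherical code bound via the Gram-matrix structure of a rank-$r$ representation of $G$, and then invoke Lemma~\ref{phius}. First I would recall from Corollary~\ref{lova} that $\rho(G)\leqslant\tau(G)=\min|\mathnormal{\Delta}(u,v)|$ over distinct non-adjacent vertices $u,v$, so it suffices to exhibit a pair of non-adjacent vertices whose symmetric difference is small, namely smaller than $\big(1-\tfrac{1}{\sqrt2}\big)n$. The idea is that if every pair of non-adjacent vertices had a large symmetric difference, then the vertices would have to be ``spread out'' in a way that a rank-$r$ graph on $n\geqslant5\cdot2^{(r-3)/2}-2$ vertices cannot accommodate, because the rows of the adjacency matrix live in an $r$-dimensional space.

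The key step is to build the spherical code. Since $\mathrm{rank}(A(G))=r$, the row space $\mathrm{row}(A(G))$ is $r$-dimensional, and I would realize each vertex $v$ by a suitable vector $\bmi{w}_v\in\mathbb{R}^r$ so that inner products encode adjacency-type information. The natural normalization is to pass to unit vectors $\bmi{x}_v=\bmi{w}_v/\|\bmi{w}_v\|$ and to compute the angle between $\bmi{x}_u$ and $\bmi{x}_v$ in terms of $|\mathnormal{\Delta}(u,v)|$. Concretely, I expect that for a rank-$r$ graph one obtains, after choosing coordinates so that the Gram matrix of the $\bmi{x}_v$ matches the adjacency structure, a relation of the form
\begin{equation*}
\bmi{\langle}\bmi{x}_u,\bmi{x}_v\bmi{\rangle}\ \leqslant\ \sqrt{2}-1
\end{equation*}
precisely when $|\mathnormal{\Delta}(u,v)|\geqslant\big(1-\tfrac{1}{\sqrt2}\big)n$, i.e.\ when the pairwise angle is at least $\varphi_0=\cos^{-1}(\sqrt2-1)$. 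Thus, if the desired bound on $\rho(G)$ were to fail, every pair of the $n$ vectors would subtend an angle of at least $\varphi_0$, giving an $(r,n,\varphi_0)$-spherical code; but then $n\leqslant\text{\sl{M}}(r,\varphi_0)$, and since $r\geqslant46$ we may take $n\geqslant47$ in Lemma~\ref{phius} (after accounting for the precise dimension used) to conclude $n<5\cdot2^{(r-4)/2}-2$, contradicting the hypothesis $n\geqslant5\cdot2^{(r-3)/2}-2$.

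The main obstacle I anticipate is the bookkeeping that turns the discrete symmetric-difference condition into a clean inner-product threshold of exactly $\sqrt2-1$. One must choose the vector representation carefully: a raw $0/1$ embedding of the adjacency rows does not directly give unit vectors with the right angles, so I would look for an embedding (possibly using the projection onto $\mathrm{row}(A(G))$ of the standard basis vectors, or an eigen-based representation) under which $\|\bmi{w}_u-\bmi{w}_v\|^2$ is proportional to $|\mathnormal{\Delta}(u,v)|$ and the norms $\|\bmi{w}_v\|$ are controlled uniformly. Reconciling the dimension (the code lives in dimension $r$, while Lemma~\ref{phius} is stated for dimension $n\geqslant47$ with threshold $5\cdot2^{(n-4)/2}-2$) with the target bound $5\cdot2^{(r-3)/2}-2$ requires matching exponents carefully, and this is where the factor-of-two discrepancy between $(r-3)/2$ and $(r-4)/2$ must be absorbed. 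Once the spherical code is correctly set up in the right dimension, the rest is a direct application of Lemma~\ref{phius} together with Corollary~\ref{lova}.
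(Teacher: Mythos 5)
Your overall plan---turn the vertices into a spherical code and invoke Lemma~\ref{phius}---is exactly the paper's strategy, but the step you yourself flag as ``the main obstacle'' is the entire content of the proof, and it is missing from your sketch. The paper's construction is: let $M$ be the matrix obtained from $A(G)$ by replacing every $0$ with $-1$, i.e.\ $M=2A(G)-J_{n\times n}$. Then $\mathrm{rank}(M)\leqslant r+1$, every row of $M$ has norm exactly $\sqrt{n}$, and for distinct vertices $u,v$ the inner product of rows $u$ and $v$ equals $n-2|\mathnormal{\Delta}(u,v)|$, because the two rows disagree precisely in the columns indexed by $\mathnormal{\Delta}(u,v)$. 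Hence the rows of $\tfrac{1}{\sqrt{n}}M$ are $n$ unit vectors in a space of dimension $r+1$ with pairwise inner products $\tfrac{n-2|\mathnormal{\Delta}(u,v)|}{n}$. This one trick resolves both difficulties you anticipate: the norms are uniform (which, as you correctly observe, a raw $0/1$ embedding fails to give), and the code lives in dimension $r+1\geqslant 47$, not $r$, so Lemma~\ref{phius} applies verbatim and yields $n<5\cdot 2^{((r+1)-4)/2}-2=5\cdot 2^{(r-3)/2}-2$; the exponents match exactly, and the ``factor-of-two discrepancy'' you worry about disappears. (Note also that with a dimension-$r$ code, at $r=46$ you could not even invoke Lemma~\ref{phius}, which requires dimension at least $47$.)

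There is a second, logical gap independent of the construction: a spherical code requires \emph{all} pairs of vectors to subtend angle at least $\varphi_0$, but your reduction controls only non-adjacent pairs. You set up the contradiction by negating ``there exists a non-adjacent pair with small symmetric difference,'' i.e.\ you assume $\tau(G)\geqslant\big(1-\tfrac{1}{\sqrt{2}}\big)n$; this hypothesis says nothing about adjacent pairs. Two adjacent vertices $u,v$ with $N(u)\setminus\{v\}=N(v)\setminus\{u\}$ have $|\mathnormal{\Delta}(u,v)|=2$, so their rows have inner product $\tfrac{n-4}{n}$ and the code collapses, and nothing in your hypothesis forbids this. The correct hypothesis for the contradiction is $\rho(G)\geqslant\big(1-\tfrac{1}{\sqrt{2}}\big)n$ (the negation of the conclusion itself), combined with the observation that $|\mathnormal{\Delta}(u,v)|\geqslant\rho(G)$ for \emph{every} pair of distinct vertices, adjacent or not: by Lemma~\ref{lov0}\,(ii)--(iii), deleting $\mathnormal{\Delta}(u,v)$ always strictly lowers the rank, so its size is at least $\rho(G)$ by the very definition of $\rho$. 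Corollary~\ref{lova}, which is all you cite, covers only the non-adjacent case. With these two repairs---the $\pm1$ matrix in dimension $r+1$ and the hypothesis on $\rho$ rather than $\tau$---your outline becomes the paper's three-line proof.
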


\begin{proof}{
Suppose that $\rho(G)\geqslant\big(1-\tfrac{1}{\sqrt{2}}\big)n$. Let $M$ be the matrix  resulting    from   replacing all $0$ by $-1$ in $A(G)$. Clearly, $\mathrm{rank}(M)\leqslant r+1$ and by  Corollary \ref{lova},
$$\bmi{\langle}x, y\bmi{\rangle}\leqslant\frac{n-2\rho(G)}{n}\leqslant\sqrt{2}-1,$$
for every  pair  $x, y$ of  the row vectors  of  $\tfrac{1}{\sqrt{n}}M$.
It turns out that there are $n$  vectors in $\mathbb{R}^{r+1}$ where the angle  between each pair  of them  is at least  $\cos^{-1}\hspace{-1mm}\left(\sqrt{2}-1\right).$
In view of Lemma \ref{phius}, we have $n<5\cdot2^{(r-3)/2}-2$, a contradiction.}
\end{proof}

\begin{lem}\label{m2} Let $G$ be a reduced graph of order $n$ and rank $r\geqslant6$ . If $n>m(r)$, then $\rho(G)<n/2$.
\end{lem}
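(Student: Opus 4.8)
The plan is to argue by contradiction, mimicking the proof of Lemma~\ref{a1} but with the far cheaper separation angle $\tfrac{\pi}{2}$ in place of $\cos^{-1}(\sqrt2-1)$. Suppose, towards a contradiction, that $\rho(G)\geqslant n/2$. As in Lemma~\ref{a1}, let $M$ be the matrix obtained from $A(G)$ by replacing every $0$ entry by $-1$; then $M=2A(G)-J_{n\times n}$, so $\mathrm{rank}(M)\leqslant r+1$, and the rows of $\tfrac{1}{\sqrt n}M$ are $n$ unit vectors lying in a subspace of dimension at most $r+1$. These vectors are pairwise distinct, since two equal rows of $M$ would force $N(u)=N(v)$ for the corresponding vertices $u,v$, contradicting that $G$ is reduced.

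The key step is to bound all pairwise inner products. Writing $M_u$ for the row of $M$ indexed by a vertex $u$, a direct count of the coordinates on which $M_u$ and $M_v$ agree and disagree shows that $\langle M_u, M_v\rangle=n-2|\mathnormal{\Delta}(u,v)|$ for every pair of distinct vertices $u,v$, irrespective of whether $u$ and $v$ are adjacent. I would then use that $|\mathnormal{\Delta}(u,v)|\geqslant\rho(G)$ for \emph{every} such pair: for non-adjacent $u,v$ this is exactly Corollary~\ref{lova} (which gives $\rho(G)\leqslant\tau(G)$), while for adjacent $u,v$ it follows from Lemma~\ref{lov0}(ii), since deleting $\mathnormal{\Delta}(u,v)$ already strictly lowers the rank and hence must remove at least $\rho(G)$ vertices. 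Combined with the standing assumption $\rho(G)\geqslant n/2$, this yields $\langle M_u, M_v\rangle\leqslant n-2\rho(G)\leqslant0$ for all $u\neq v$.

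Consequently the $n$ distinct unit vectors $\tfrac{1}{\sqrt n}M_u$ have pairwise angles at least $\tfrac{\pi}{2}$, i.e.\ they form an $(r+1,n,\tfrac{\pi}{2})$-spherical code, so Rankin's theorem (Theorem~\ref{rankin}) gives $n\leqslant\text{\sl{M}}\big(r+1,\tfrac{\pi}{2}\big)=2(r+1)$. To close the argument it suffices to verify the elementary inequality $m(r)\geqslant 2(r+1)$ for all $r\geqslant6$; splitting on the parity of $r$ reduces this to $2^{t-1}\geqslant t+1$ (for $r=2t$) and $5\cdot2^{t-1}\geqslant 4t+6$ (for $r=2t+1$), both of which hold for $t\geqslant3$, the only tight case being $r=6$ with $m(6)=14=2(r+1)$. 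Then $n>m(r)\geqslant2(r+1)\geqslant n$ is the required contradiction, giving $\rho(G)<n/2$.

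The only genuinely delicate point I anticipate is ensuring the inner-product estimate is uniform over all pairs: the bound $|\mathnormal{\Delta}(u,v)|\geqslant\rho(G)$ for \emph{adjacent} pairs does not come from $\tau(G)$ and must be extracted separately from Lemma~\ref{lov0}(ii). The remainder is bookkeeping, and since the inequality $m(r)\geqslant2(r+1)$ is tight at $r=6$, the hypothesis $r\geqslant6$ is exactly what this method needs.
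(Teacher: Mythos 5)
Your proof is correct and follows essentially the same route as the paper: assume $\rho(G)\geqslant n/2$, pass to the $\pm1$ matrix of rank at most $r+1$, obtain an $(r+1,n,\tfrac{\pi}{2})$-spherical code, and apply Rankin's theorem to get $m(r)<n\leqslant 2(r+1)$, contradicting $r\geqslant6$. You also carefully fill in two details the paper leaves implicit, namely that adjacent pairs require Lemma~\ref{lov0}\,(ii) rather than Corollary~\ref{lova}, and the explicit verification that $m(r)\geqslant2(r+1)$ for $r\geqslant6$.
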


\begin{proof}{ If $\rho(G)\geqslant n/2$, then $\tfrac{n-2\rho(G)}{n}\leqslant0$. This, similar to the proof of Lemma \ref{a1}, implies  the existence of $n$  vectors in $\mathbb{R}^{r+1}$ such that
the angle  between each pair  of  which is at least $\frac{\pi}{2}$. From Theorem \ref{rankin},  it follows  that  $m(r)<n\leqslant2(r+1)$, which contradicts  $r\geqslant6$.}
\end{proof}

In what follows, we assume  that
$\mathbb{G}$ is  a counterexample  to Conjecture \ref{ackconj}  with the minimum possible order. Let
$n=|\mathbb{G}|$, $r=\mathrm{rank}(\mathbb{G})$, $\tau=\tau(\mathbb{G})$, and let
$H$ be an induced  subgraph of $\mathbb{G}$ of order $n-\tau$ with
duplicated vertices.
If $\mathrm{rank}(H)\geqslant r-3$, then by Lemma \ref{lov}\,(iii), we
may assume  that  $\{\upsilon_1, \upsilon_1'\}, \ldots, \{\upsilon_s, \upsilon_s'\}$ are  the duplication  classes of $H$.
For simplicity, let  $S=\langle\{\upsilon_1,\ldots,  \upsilon_s\}\rangle$ and $S'=\langle\{\upsilon_1',\ldots,  \upsilon_s'\}\rangle$.
Further, put   $T=\mathbb{G}- H$ and let  $T_1$ and $T_2$ be the sets  given  in Lemma \ref{lov}\,(iii)  with sizes $t_1$ and $t_2$, respectively.
We  denote the  number of isolated vertices of $H$  by $\epsilon$. Note that by Lemma \ref{lov}\,(ii), $\epsilon\in\{0, 1\}$.
Finally, we set $P=H-(V(S)\cup V(S')\cup\{c\})$ and $p=|P|$, where $c$ is the possible isolated vertex of $H$.

\begin{lem}\label{m1}
$n=m(r)+1$.
\end{lem}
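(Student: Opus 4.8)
The plan is to exploit the minimality of the counterexample $\mathbb{G}$ to squeeze $n$ between $m(r)+1$ and $m(r)+1$ from both sides. Since $\mathbb{G}$ is a counterexample to Conjecture~\ref{ackconj}, we have $n\geqslant m(r)+1$ by definition, so the entire content is the upper bound $n\leqslant m(r)+1$. First I would argue that $\mathbb{G}-v$ is a reduced graph for some suitably chosen vertex $v$, or more precisely that by removing a minimal rank-decreasing set of vertices one obtains a reduced graph $G'$ whose rank is strictly smaller than $r$. By the minimality of $\mathbb{G}$, every reduced graph of rank less than $r$ satisfies the conjecture, so $|G'|\leqslant m(\mathrm{rank}(G'))$.

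The key quantitative step is to control how much the rank can drop. I would take $H'$ to be an induced subgraph realizing $\rho(\mathbb{G})$, so that $|H'|=n-\rho(\mathbb{G})$ and $\mathrm{rank}(H')<r$. By Lemma~\ref{lov0}\,(iv), either $\mathrm{rank}(H')\geqslant r-2$, or $H'$ is reduced with $\mathrm{rank}(H')=r-2$. The idea is that after removing $\rho(\mathbb{G})$ vertices the rank drops by at most $2$, and a careful bookkeeping shows that the surviving reduced graph has order at least $n-\rho(\mathbb{G})$ while its rank is at most $r-2$. Combining with the conjectural bound for smaller rank gives $n-\rho(\mathbb{G})\leqslant m(r-2)$. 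On the other hand, Lemma~\ref{a1} (applicable since $r\geqslant 46$, as we may assume after the reduction to rank at most $46$ fails) yields $\rho(\mathbb{G})<\bigl(1-\tfrac{1}{\sqrt2}\bigr)n$, and Lemma~\ref{m2} yields $\rho(\mathbb{G})<n/2$. Feeding these into $n\leqslant m(r-2)+\rho(\mathbb{G})$ and using the recurrence $m(r)=2m(r-2)+2$ should pin $n$ down.

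Concretely, I would write $n\leqslant m(r-2)+\rho(\mathbb{G})$ and substitute the bound on $\rho(\mathbb{G})$ to get $n< m(r-2)+\tfrac{n}{2}$, hence $n<2m(r-2)=m(r)-2<m(r)+1$. This would immediately contradict $n\geqslant m(r)+1$ unless the rank-drop is exactly controlled so that equality $n=m(r)+1$ is forced. The delicate point is that the naive estimate overshoots: the real work is to show that $H$ (the maximum-order induced subgraph with duplicated vertices, introduced just before the lemma) can be made reduced after collapsing its duplication classes, and that this collapsed graph has rank at most $r-2$ and order at least $n-\tau$, so that $n-\tau\leqslant m(r-2)$ while simultaneously $\tau\leqslant n/2$ forces $n\leqslant 2m(r-2)+\text{(small error)}$. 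Matching the $+1$ exactly requires tracking the additive constants in $m(r)=2m(r-2)+2$ together with the contribution of the isolated vertex $\epsilon\in\{0,1\}$ and the two sides $T_1,T_2$.

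The hard part will be establishing the precise upper bound $n\leqslant m(r)+1$ rather than a bound that is off by a small additive constant. Because the recurrence $m(r)=2m(r-2)+2$ has a $+2$ term, and because collapsing duplication classes and accounting for the isolated vertex each shift the count by $O(1)$, the estimates must be carried out with exact constants rather than asymptotically. I expect the main obstacle to be showing that after removing the $\tau$ vertices of $T=\mathbb{G}-H$ and collapsing each duplication class $\{\upsilon_i,\upsilon_i'\}$ to a single vertex, the resulting reduced graph has rank \emph{exactly} $r-2$ (not merely at most $r-2$): one must rule out a drop to $r-3$ or lower, which is precisely where the hypothesis $\mathrm{rank}(H)\geqslant r-3$ and the structural conclusions of Lemma~\ref{lov}\,(i)--(iv) on $T_1$, $T_2$, and the single possible isolated vertex come into play. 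Once that rank equality is secured, the conjectural bound $m(r-2)$ for the collapsed graph combines with $|T|=\tau<n/2$ and the recurrence to deliver $n=m(r)+1$ exactly.
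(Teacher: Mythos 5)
There is a genuine gap at the central step of your proposal: the inequality $n-\rho(\mathbb{G})\leqslant m(r-2)$ is never justified, and it cannot be obtained the way you suggest. By Lemma \ref{lov0}\,(iv) (which you also misquote: it says the subgraph $L$ realizing $\rho(\mathbb{G})$ always has rank at least $r-2$, with equality whenever $L$ is \emph{not} reduced), the rank of $L$ is $r-1$ or $r-2$; if it is $r-1$ then $L$ is reduced and minimality yields only $n-\rho(\mathbb{G})\leqslant m(r-1)$, while if it is $r-2$ then $L$ need not be reduced, so the conjectured bound cannot be applied to $L$ at all --- one must first pass to a reduction of $L$, deleting one vertex from each duplication class and any isolated vertex, which costs roughly half of its vertices. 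You yourself notice that your chain of estimates overshoots to $n<m(r)-2$, contradicting $n\geqslant m(r)+1$ (it would disprove the existence of \emph{any} counterexample, which is far too strong), but you never repair it: the repair you sketch has the bookkeeping wrong (the collapsed graph has order $n-\tau-s-\epsilon$, which is only guaranteed to be about $\tfrac{1}{2}(n-\tau)$, not ``at least $n-\tau$''), and with the only bound you invoke, $\tau<n/2$ from Lemma \ref{m2}, the halving argument gives roughly $n\leqslant 2m(r)$, nowhere near $m(r)+1$. Also, securing that the collapsed rank is \emph{exactly} $r-2$, which you single out as the main obstacle, is beside the point: since $m$ is monotone, only an upper bound on the rank is needed.

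The paper's proof rests on a dichotomy you mention in passing but then abandon: remove a \emph{single, arbitrary} vertex $v$. If $\mathbb{G}-v$ is reduced, then minimality alone --- with no control of the rank drop whatsoever, only monotonicity of $m$ --- gives $n-1\leqslant m(\mathrm{rank}(\mathbb{G}-v))\leqslant m(r)$, hence $n=m(r)+1$. If $\mathbb{G}-v$ is not reduced, then Lemma \ref{lov0} forces $\mathrm{rank}(\mathbb{G}-v)=r-2$; duplicated vertices now appear after deleting just one vertex, so $H$ has order $n-1$ and rank at least $r-2\geqslant r-3$, and Lemma \ref{lov} makes every duplication class of $H$ a pair, with at most one isolated vertex. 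Deleting one vertex from each pair and the possible isolated vertex leaves a reduced graph of rank $r-2$ and order at least $\tfrac{n}{2}-1$, whence $\tfrac{n}{2}-1\leqslant m(r-2)$, i.e.\ $n\leqslant 2m(r-2)+2=m(r)$, a contradiction. The factor-of-two loss from collapsing pairs is affordable precisely because only one vertex was removed; with $\tau$ as large as $n/2$ the same computation yields nothing. Note also that, unlike your proposal, the paper's argument uses neither Lemma \ref{a1} nor Lemma \ref{m2} nor any hypothesis such as $r\geqslant46$.
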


\begin{proof}{
Let $v\in V(\mathbb{G})$.  If $\mathbb{G}-v$ is reduced, then by the minimality of $\mathbb{G}$, we have $|\mathbb{G}-v|\leqslant m(r)$ and so $n=m(r)+1$.
If $\mathbb{G}-v$ is not reduced, then either there is a vertex  $x\in V(\mathbb{G})$ such that $N(x)=\{v\}$ or there are two non-adjacent vertices $y, y'\in V(\mathbb{G})$ such that $\mathnormal{\Delta}(y, y')=\{v\}$. Hence, by Lemma \ref{lov0},  $\mathrm{rank}(\mathbb{G}-v)=r-2$. Therefore,  Lemma \ref{lov}\,(iii) yields that
every duplication class of $H$  has two vertices. Thus  $\tfrac{n}{2}-1\leqslant m(r-2)$. This is a contradiction as $m(r)=2m(r-2)+2$.}\end{proof}

\begin{lem}\label{t1g}
If $\tau\leqslant  m(r-2)+2$, then $\mathrm{rank}(H)\geqslant r-3$.
\end{lem}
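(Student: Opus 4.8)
The plan is to prove the contrapositive: assuming $\mathrm{rank}(H) \leqslant r-4$, I will derive a contradiction with the hypothesis $\tau \leqslant m(r-2)+2$ by showing that $\mathbb{G}$ is too small to be the minimal counterexample, or that its order contradicts Lemma \ref{m1}. The key resource is that $H$ is the maximum-order induced subgraph of $\mathbb{G}$ having duplicated vertices, so $|H| = n - \tau$, and $\mathbb{G}$ is reduced of order $n = m(r)+1$ by Lemma \ref{m1}.

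First I would pick a duplication class of $H$, say two vertices $u, v$ with $N_H(u) = N_H(v)$. Because $H$ has maximum order among induced subgraphs with duplicated vertices, the deleted set $T = V(\mathbb{G}-H)$ must coincide with $\mathnormal{\Delta}_{\mathbb{G}}(u,v)$; indeed every vertex outside $H$ must distinguish $u$ from $v$ (otherwise we could enlarge $H$), which forces $\tau = |T| = |\mathnormal{\Delta}(u,v)|$. This is exactly the mechanism used in the proof of Lemma \ref{lov}. Now I would invoke Corollary \ref{lova}, which gives $\tau(\mathbb{G}) = \min |\mathnormal{\Delta}(u,v)|$ over non-adjacent pairs, together with Lemma \ref{lov0}(iii): deleting $\mathnormal{\Delta}(u,v)$ for a non-adjacent pair drops the rank by at least $2$, so $\mathrm{rank}(H) = \mathrm{rank}(\mathbb{G} - \mathnormal{\Delta}(u,v)) \leqslant r-2$.

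The crux is to push the rank drop further under the standing assumption $\mathrm{rank}(H) \leqslant r-4$ and combine it with a counting bound. If $\mathrm{rank}(H) \leqslant r-4$, then $H$ itself — being a proper induced subgraph of smaller rank — should be handled by the minimality of $\mathbb{G}$. The idea is that $H$ has duplicated vertices, and after identifying each duplication class to a single vertex we obtain a reduced graph $H^\ast$ with $\mathrm{rank}(H^\ast) = \mathrm{rank}(H) \leqslant r-4$; its order is $|H| - s$ where $s$ counts the classes. Since $\mathbb{G}$ is the minimal counterexample and $r-4 < r$, the graph $H^\ast$ obeys the conjecture, so $|H^\ast| \leqslant m(r-4)$, giving a bound on $|H| = n - \tau$ in terms of $m(r-4)$ and $s$. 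I would then combine $n = m(r)+1$ with $m(r) = 2m(r-2)+2 = 4m(r-4) + 6$ and the hypothesis $\tau \leqslant m(r-2)+2 = 2m(r-4)+4$, using Lemma \ref{rank} to control cross terms; the arithmetic should show $n - \tau$ exceeds the allowed size of $H$, contradicting $\mathrm{rank}(H) \leqslant r-4$.

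The main obstacle I anticipate is controlling the number of duplication classes $s$ and the isolated vertex count $\epsilon$ when passing to the reduced quotient $H^\ast$, since the minimality bound applies to $H^\ast$ rather than to $H$ directly, and the gap $|H| - |H^\ast|$ depends on how many vertices collapse. I would need the structural facts from Lemma \ref{lov}(ii)—every duplication class has exactly two elements and there is at most one isolated vertex—to pin down $|H| = |H^\ast| + s + \epsilon$ precisely, and then verify that even the most favorable distribution of $\tau$ between $t_1$ and $t_2$ cannot reconcile the inequalities. The delicate point is ensuring the chosen inequality in Lemma \ref{rank} applies in the right range of $r$ (here $r \geqslant 46$ comfortably exceeds the thresholds $r \geqslant 6$ and $r \geqslant 10$), so the telescoping identity $m(r)=2m(r-2)+2$ closes the gap cleanly.
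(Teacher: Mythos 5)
Your overall strategy—pass to a reduced graph sitting inside $H$, apply the minimality of $\mathbb{G}$ to it, and close with $m(r)=2m(r-2)+2$—can be made to work, and the arithmetic does close: once every duplication class of $H$ has exactly two vertices, the collapsed graph (one vertex per class, minus at most one isolated vertex) is reduced of rank $\mathrm{rank}(H)\leqslant r-4$, so minimality gives $|H|\leqslant 2\big(m(r-4)+1\big)=m(r-2)$, hence $n=|H|+\tau\leqslant m(r-2)+m(r-2)+2=m(r)$, contradicting $n=m(r)+1$. However, there is a genuine gap at the crucial step: you obtain the structure of the duplication classes (size two, $\epsilon\leqslant1$, the sets $T_1,T_2$) by invoking Lemma \ref{lov}, and Lemma \ref{lov} carries the hypothesis $\mathrm{rank}(H)\geqslant\mathrm{rank}(\mathbb{G})-3$, which is exactly the conclusion of Lemma \ref{t1g} and exactly what your contrapositive assumption $\mathrm{rank}(H)\leqslant r-4$ denies. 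As written, the argument is circular. The class-size bound is not cosmetic: without it the quotient argument dies—if, say, $H$ were a single huge duplication class (an edgeless graph, of rank $0\leqslant r-4$), then $H^{\ast}$ is a single vertex and minimality yields no contradiction. The repair is to note that the ``every duplication class has exactly two elements'' part of the proof of Lemma \ref{lov}\,(ii) uses only the maximality of $H$ and the non-emptiness of $V(\mathbb{G}-H)$, never the rank hypothesis; you would need to rerun that maximality argument explicitly (and likewise control isolated vertices by maximality) rather than cite the lemma.

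For comparison, the paper sidesteps all of this by going up instead of down: adjoin a single vertex $w\in V(\mathbb{G}-H)$ to $H$ to form $K$ of order $n-\tau+1$. Maximality of $H$ alone forces $K$ to have no duplicated vertices, and adding one vertex raises rank by at most $2$, so $\mathrm{rank}(K)\leqslant r-2$ under the assumption $\mathrm{rank}(H)\leqslant r-4$; deleting the at most one isolated vertex and applying the minimality of $\mathbb{G}$ gives $n-\tau+1-\epsilon\leqslant m(r-2)$, whence $n\leqslant m(r)$, a contradiction. That one-vertex trick needs no structure theory for the duplication classes at all and applies minimality at rank level $r-2$ rather than $r-4$, which is why the paper's proof is four lines long while your route, even after the repair above, must reprove part of Lemma \ref{lov}\,(ii) from scratch.
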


\begin{proof}{
Suppose that $\tau\leqslant  m(r-2)+2$ and $\mathrm{rank}(H)\leqslant r-4$. Add a vertex from $ V(\mathbb{G}-H)$ to $H$ and call the resulting graph $K$. Obviously, $K$  has no duplicated vertices and $\mathrm{rank}(K)\leqslant r-2$. Thus
$n-\tau+1-\epsilon\leqslant m(r-2)$. This implies that   $n\leqslant m(r)$, a contradiction.}
\end{proof}

\begin{thm}\label{aslasl}
Suppose that  $\mathrm{rank}(H)\geqslant r-3$ with $r\geqslant10$. Then  $\epsilon=0$ and   one of the following holds.
\begin{itemize}
\item [{\rm (i)}] $S=\text{K}_1$ and  $\tau\geqslant m(r-2)+2$.
\item [{\rm (ii)}]  $S=\text{\sl{K}}_2$ and  $\tau\geqslant m(r-2)+1$.
\item [{\rm (iii)}] $S=\text{\sl{K}}_3$ and  $\tau=m(r-2)$.
\end{itemize}
\end{thm}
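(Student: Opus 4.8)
The plan is to exploit the rigid ``cocktail-party'' adjacency forced on the pairs $\{\upsilon_i,\upsilon_i'\}$ and to convert the maximality of $H$ into numerical inequalities via the minimality of $\mathbb{G}$. First I would record the local structure. Since $N_H(\upsilon_i)=N_H(\upsilon_i')$, the vertices $\upsilon_i,\upsilon_i'$ are non-adjacent and $\mathnormal{\Delta}(\upsilon_i,\upsilon_i')=T$; hence Lemma \ref{lov0}(iii) applied to $\upsilon_1,\upsilon_1'$ gives $\mathrm{rank}(H)=\mathrm{rank}(\mathbb{G}-\mathnormal{\Delta}(\upsilon_1,\upsilon_1'))\leqslant r-2$, so that $\mathrm{rank}(H)\in\{r-3,r-2\}$. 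By the maximality of $H$, every $t\in T$ is adjacent to exactly one vertex of each class, and one checks that for $i\neq j$ the four adjacencies $\upsilon_i\upsilon_j,\upsilon_i\upsilon_j',\upsilon_i'\upsilon_j,\upsilon_i'\upsilon_j'$ coincide while $\upsilon_i\not\sim\upsilon_i'$; thus $S\cong S'$ and $\langle S\cup S'\rangle$ is $S$ with each vertex doubled. Contracting each class of $H$ and deleting its at most one isolated vertex produces a reduced graph $\widetilde H$ with $\mathrm{rank}(\widetilde H)=\mathrm{rank}(H)$ and $|\widetilde H|=s+p-\epsilon$, so by the minimality of $\mathbb{G}$ we obtain $s+p-\epsilon\leqslant m(\mathrm{rank}\widetilde H)\leqslant m(r-2)$. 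Finally, by Lemma \ref{m1} and $m(r)=2m(r-2)+2$ we have $n=2m(r-2)+3$, and since $|H|=2s+p$ this reads $\tau=n-2s-p=2m(r-2)+3-2s-p$.

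Next I would settle the qualitative claims granting the bound $s\leqslant 3$ and $\epsilon=0$. If $\upsilon_1\not\sim\upsilon_2$ for two members of $S$, then by the coincidence of adjacencies the set $\{\upsilon_1,\upsilon_1',\upsilon_2,\upsilon_2'\}$ is independent, none of its four vertices lies in $\mathnormal{\Delta}(\upsilon_1,\upsilon_2)$, and $\mathnormal{\Delta}(\upsilon_1,\upsilon_2)\cap T=\emptyset$; hence $\tau\leqslant|\mathnormal{\Delta}(\upsilon_1,\upsilon_2)|\leqslant|H|-4=2s+p-4$. Combining this with $\tau=2m(r-2)+3-2s-p$ and $s+p\leqslant m(r-2)$ forces $s\geqslant 4$. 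Consequently, once $\epsilon=0$ is known, $s\leqslant 3$ already implies that $S$ is complete, that is $S=\text{\sl{K}}_s$. Moreover, substituting $p\leqslant m(r-2)-s$ into $\tau=2m(r-2)+3-2s-p$ yields $\tau\geqslant m(r-2)+3-s$, which is precisely the stated lower bound $m(r-2)+2,\ m(r-2)+1,\ m(r-2)$ in the cases $s=1,2,3$; for case (iii) the reverse inequality $\tau\leqslant m(r-2)$ is equivalent to $s+p=m(r-2)$, i.e. to $\widetilde H$ being extremal of rank $r-2$.

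The heart of the argument, and the step I expect to be hardest, is the bound $s\leqslant 3$ together with $\epsilon=0$ and the extremality needed when $s=3$. The obstacle is that $s+p\leqslant m(r-2)$ does not by itself control $s$, since $p$ is unconstrained; one needs an inequality involving $s$ but not $p$. I would extract it from a vertex partition adapted to the $S/S'$, $T_1/T_2$ symmetry, such as $V(\mathbb{G})=(S\cup T_1\cup P)\sqcup(S'\cup T_2)$, whose two blocks have orders summing to $n$. The small block $\langle S'\cup T_2\rangle$ has small rank, while for the large block a genuine rank deficiency must be harvested from Lemma \ref{lov0}(i) applied to a carefully chosen vertex (removing a neighbourhood drops the rank by at least two); after reducing each block and invoking the minimality of $\mathbb{G}$ to bound each order by $m$ of its rank, Lemma \ref{rank}, which is tailored exactly to sums $m(k)+m(r-k)$ and $m(k)+m(r-k+1)$, should push $n$ below $2m(r-2)+3$ whenever $s\geqslant 4$, contradicting $n=m(r)+1$. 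Here Lemma \ref{lov}(iv) is what eliminates $\epsilon=1$, since an isolated vertex of $H$ would force one of $T_1,T_2$ to be empty and collapse a block. The delicate points are verifying that both blocks are truly reduced, accounting for the slack so that the rank sum lands in the range covered by Lemma \ref{rank}(i),(ii), and making the constants sharp enough to pin $s$ at $3$ rather than at some larger absolute bound and to force the exact value $\tau=m(r-2)$ when $s=3$; it is precisely at this point that the explicit form of $m$ and the hypothesis $r\geqslant 10$ enter.
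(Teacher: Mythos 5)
Your proposal does not prove the theorem: everything that is genuinely hard is deferred to your third paragraph, which is a plan rather than an argument. What you actually establish is the bookkeeping of the paper's Step~1 (namely $s+p\leqslant m(r-2)$, the identity $n=2m(r-2)+3=\tau+2s+p+\epsilon$, and the consequent lower bounds $\tau\geqslant m(r-2)+3-s$), plus one genuinely nice observation: combining Corollary~\ref{lova} with the $T_1/T_2$ structure of Lemma~\ref{lov}\,(iii), two non-adjacent vertices of $S$ would force $\tau\leqslant|H|-4$ and hence $s\geqslant4$, so that \emph{if} $\epsilon=0$ and $s\leqslant3$ are already known, then $S$ is complete. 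But $\epsilon=0$, the bound $s\leqslant3$, and the upper bound $\tau\leqslant m(r-2)$ in case (iii) \emph{are} the content of the theorem, and none of them is proved. Your sketched route --- partition $V(\mathbb{G})$ into $S\cup T_1\cup P$ and $S'\cup T_2$, bound each block's rank, and invoke Lemma~\ref{rank} --- comes with no mechanism for carrying it out: rank is not additive over a vertex partition, so one must exhibit explicit linearly independent families of rows of $A(\mathbb{G})$, and one must know that the relevant blocks are \emph{reduced} before the minimality of $\mathbb{G}$ can convert a rank bound into an order bound. You flag both difficulties yourself (``verifying that both blocks are truly reduced,'' ``accounting for the slack'') without resolving either; ``should push $n$ below $2m(r-2)+3$'' is not a proof.

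For comparison, the paper spends essentially its entire proof on exactly these points, and its decomposition is different from yours: it works with $K=\langle T\cup S\rangle$ and $K'=\langle T\cup S'\rangle$, not with blocks that mix $P$ into one side. It first proves a chain of structural facts (Steps 2--8): $S$ has no duplication classes; every duplication class of $T$ meets both $T_1$ and $T_2$; $\mathrm{rank}(K)\geqslant r-1$ and $\mathrm{rank}(K')\geqslant r-1$ (via the minimality of $\mathbb{G}$); $T$ has no isolated vertices; both $T_1$ and $T_2$ are non-empty; and only then $\epsilon=0$ from Lemma~\ref{lov}\,(iv). It then splits into cases according to whether $T$ has a duplication class and whether $(\bmi{j}_{t_1},\bmi{0})\in\mathrm{row}(A(T))$ and $\bmi{j}_{s}\in\mathrm{row}(A(S))$; in each case an explicit block-matrix argument yields $\mathrm{rank}(T)+\mathrm{rank}(S)\leqslant r+c$ with $c\in\{-1,0,1\}$, and since $T$ and $S$ are by then known to be reduced, Lemma~\ref{rank} pins $\mathrm{rank}(S)$ to $\{0,2,3\}$, giving $S\in\{\text{\sl{K}}_1,\text{\sl{K}}_2,\text{\sl{K}}_3\}$ together with the stated bounds on $\tau$. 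In particular, the equality $\tau=m(r-2)$ in case (iii) comes from the upper bound $\tau+s\leqslant m(r-2)+m(3)$, i.e.\ from bounding the order of the \emph{reduced} graph $T$ by $m(\mathrm{rank}(T))$ --- not, as you suggest, from extremality of $\langle S\cup P\rangle$, which you in any case do not establish. None of this machinery, which is where the hypothesis $r\geqslant10$ and the minimality of $\mathbb{G}$ actually do their work, appears in your proposal.
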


\begin{proof}{
We denote  the  possible  isolated vertex  of $H$ by $c$.
Also,  let $k=\mathrm{rank}(S)$,  $K=\langle V(T)\cup V(S)\rangle$ and  $K'=\langle V(T)\cup V(S')\rangle$.
We first establish  the following steps.

\noindent{\bf{\textsf{Step 1.}}}  $s+p\leqslant m(r-2)$, $\tau+s\geqslant m(r-2)+3-\epsilon$, and $\tau\geqslant p+3-\epsilon$.

Applying   Lemma \ref{lov0}\,(iii),   $\mathrm{rank}(H)\leqslant r-2$ and so $\mathrm{rank}(\langle V(S)\cup V(P)\rangle)\leqslant r-2$. 
By  the definitions of $S$ and $P$, $\langle V(S)\cup V(P)\rangle$   is a  reduced graph and thus  $s+p\leqslant m(r-2)$.  
Moreover,  $n=m(r)+1$ and $n=\tau+2s+p+\epsilon$ imply that  $\tau+s\geqslant m(r-2)+3-\epsilon$. By subtracting these inequalities, we obtain the last inequality.

\noindent{\bf{\textsf{Step 2.}}}  The graph $S$ has no  duplication classes.

By contradiction, suppose  that there are  two vertices $a, b\in S$ with $  N_{S}(a)=  N_{S}(b)$. Hence  $\mathnormal{\Delta}(a,b)\subseteq V(P)$  and by Corollary \ref{lova}, we obtain that $\tau\leqslant p$,  which is a contradiction to Step 1.

\noindent{\bf{\textsf{Step 3.}}}  If $S$ has isolated vertices, then  both  $T_{1}$ and $T_{2}$ are non-empty.

By contradiction, assume   that $\upsilon_1$ is an isolated vertex of $S$ and  $T_{1}$ is empty. Thus $N(\upsilon_1)\subseteq V(P)$.
We show that $\mathbb{G}-(N(\upsilon_1)\cup\{\upsilon_1\})$ is reduced.
If  $\mathbb{G}-(N(\upsilon_1)\cup\{\upsilon_1\})$ has   an isolated vertex, say $x$, then $x$ is not adjacent to $\upsilon_1$ and  $\mathnormal{\Delta}(x, \upsilon_1)\subseteq N(\upsilon_1)$, and if $\mathbb{G}-(N(\upsilon_1)\cup\{\upsilon_1\})$ has   a duplication class, say $\{y, y'\}$, then  $\mathnormal{\Delta}(y, y')\subseteq N(\upsilon_1)$. Since   $|N(\upsilon_1)\cup\{\upsilon_1\}|<p+3-\epsilon\leqslant\tau$, both cases contradict the  minimality of $\tau$ using Lemma \ref{lov0}.
So $\mathbb{G}-(N(\upsilon_1)\cup\{\upsilon_1\})$ is a reduced graph of order at least  $n-p-1$ and rank at most $r-2$. This implies that $p\geqslant m(r-2)+2$,  which is a contradiction to Step 1.

\noindent{\bf{\textsf{Step 4.}}}   Every duplication class of $T$ consists of  one vertex from $T_1$ and one from $T_2$.

Otherwise, without loss of generality, suppose that there are  two vertices $a, b\in T_1$ such that $N_T(a)=N_T(b)$. Therefore, $\mathnormal{\Delta}(a,b)\subseteq V(P)$ and so $\tau\leqslant p$,  which is a contradiction to Step 1.

\noindent{\bf{\textsf{Step 5.}}}  $\mathrm{rank}(K)\geqslant r-1$ and  $\mathrm{rank}(K')\geqslant r-1$.

We only prove  that $\mathrm{rank}(K)\geqslant r-1$.  By Step 1, $|K|=\tau+s\geqslant m(r-2)+3-\epsilon$. We show  that $K$ has a reduced  induced subgraph of  order  at least $m(r-2)+1$ which in turn
implies    that $\mathrm{rank}(K)\geqslant r-1$ by  the minimality of $\mathbb{G}$.
If  $K$ has no duplication classes, then  $K$ has at  most  one isolated vertex. Thus,  after removing the possible isolated vertex  from
$K$, we obtain   the desired subgraph.
So, assume that $K$ has  duplication classes.
By applying Steps 2, 3, and 4,  it is  easily checked that $T_1$ is non-empty  and  $K$ has exactly one duplication class which is of the form  $\{\upsilon_1, x\}$, for some $x\in T_2 $.
Hence   $K$ has at most one isolated vertex. Furthermore,   Lemma \ref{lov}\,(iv)
implies that $\epsilon=0$.
Now,  after removing the possible isolated vertex  from
$K-\upsilon_1$,  we obtain   the desired subgraph.

\noindent{\bf{\textsf{Step 6.}}}   The graph $T$  has no isolated vertices.

By contradiction, without loss of generality, assume  that $N_{T}(a)$ is empty,   for some   $a\in T_1$. Then $N(a)\subseteq  V(S)\cup V(P)\cup\{c\}$. Since  $K'=\mathbb{G}- (V(S)\cup  V(P)\cup\{c\})$, we deduce that  $\mathrm{rank}(K')\leqslant\mathrm{rank}(\mathbb{G}-  N(a))\leqslant r-2$, which is a  contradiction to Step 5.

\noindent{\bf{\textsf{Step 7.}}} Both   $T_1$ and $T_2$ are non-empty.

If $T_1$ is empty, then  $\mathrm{rank}(\langle V(T)\cup\{c\}\rangle)+\mathrm{rank}(S)\leqslant r$.  By  Steps 2, 3, 4,   and 6,  $\langle V(T)\cup\{c\}\rangle$ and $S$ are reduced graphs. So,   Step 1 implies that  $m(r-2)+3\leqslant \tau+s+\epsilon\leqslant m(r-k)+m(k)$,  which contradicts Lemma \ref{rank}\,(i), since $2\leqslant k\leqslant r-2$.
Similarly, we see that $T_2$ is non-empty.

\noindent{\bf{\textsf{Step 8.}}}   $\epsilon=0$.

It immediately follows from Step 7 and Lemma \ref{lov}\,(iv).

We now proceed with the following  cases.

\noindent{\bf{\textsf{Case 1.}}}  Assume that  $T$ has  a duplication class. We prove   that $S=\text{\sl{K}}_2$, $\mathrm{rank}(T)=r-3$, $\tau=m(r-2)+1$,  and $p=m(r-2)-2$. Since $T$ has a duplication class, $(\bmi{j}_{t_{1}}, \bmi{0})\not\in\mathrm{row}(A(T))$. By   Step 4, the two row vectors     of
$$X=\left[
\begin{array}{c|c}
A(T)   &\hspace{-1.7mm}\begin{array}{c} J_{t_{1}\times s}\\\hline O \end{array} \\
\end{array}\right]$$
 corresponding to  a  duplication class of $T$ are linearly independent. Extend these vectors  to  a  basis $\EuScript{B}$ of size $\mathrm{rank}(T)+1$ for $\mathrm{row}(X)$.
It is straightforward to see that  the row vectors  of
$$Y=\left[
\begin{array}{c|c}
A(T) &  \hspace{-2.7mm}\begin{array}{c|c} J_{t_{1}\times s}& O \\\hline O &  J_{t_{2}\times s} \end{array} \\\hline
\begin{array}{c|c} J_{s\times t_{1}} & O\\\hline  O & J_{s\times t_{2}} \end{array} \hspace{-1.7mm} & \hspace{-3.7mm}\begin{array}{c|c} \hspace{2mm}A(S) & \hspace{1mm} A(S)\\\hline \hspace{2mm}A(S) & \hspace{1mm} A(S)\end{array}
\end{array}
\right]$$
corresponding to   $\EuScript{B}$ along with  the  row vectors  of $Y$  corresponding to a basis for $\mathrm{row}(A(S))$  are  linearly independent. This implies that $\mathrm{rank}(T)+\mathrm{rank}(S)\leqslant r-1$. Note that  by   Step 4, the maximum reduced subgraph of $T$ has at least $\tau/2$ vertices. Moreover,   since $\mathrm{rank}(K)\geqslant r-1$, it is not hard to show  that $\bmi{j}_{s}\in\mathrm{row}(A(S))$ and so by   Step 2, $S$  is reduced.  Now, from   Steps 1, 6,  and 8, we have $m(r-2)+3\leqslant \tau+s\leqslant2m(r-k-1)+m(k)=m(r-k+1)+m(k)-2$. Applying  Lemma \ref{rank}\,(ii), we find that $k=2$ and hence  $S=\text{\sl{K}}_2$. Since $\tau\geqslant m(r-2)+1$, we deduce that  $\mathrm{rank}(T)=r-3$. If  $\{a, b\}$ is  a duplication class of $T$, then $\mathnormal{\Delta}(a,b)\subseteq  V(H)$ and therefore  by Corollary  \ref{lova},  $\tau\leqslant p+4$. On the other hand, by Step 1, we have  $\tau\geqslant p+3$ and since $n=\tau+p+4$, it follows that  $\tau=m(r-2)+1$ and $p=m(r-2)-2$, as required.

\noindent{\bf{\textsf{Case 2.}}}  Assume that $T$ has  no duplication classes.

\noindent{\bf{\textsf{Subcase 2.1.}}}   $(\bmi{j}_{t_{1}}, \bmi{0})\not\in\mathrm{row}(A(T))$ and  $\bmi{j}_{s}\not\in\mathrm{row}(A(S))$.

Since $\mathrm{rank}(X)=1+\mathrm{rank}(T)$ and $\bmi{j}_{s}\not\in\mathrm{row}(A(S))$,  the row vectors  of  $Y$  corresponding to a basis of $\mathrm{row}(X)$ along with the  row vectors  of  $Y$  corresponding to a basis of $\mathrm{row}(A(S))$  are  linearly independent. This implies that $\mathrm{rank}(T)+\mathrm{rank}(S)\leqslant r-1$.  So,  by Steps 1, 2, 6,  and 8, we have $m(r-2)+3\leqslant \tau+s\leqslant m(r-k-1)+m(k)+1\leqslant  m(r-k)+m(k)$. Applying  Lemma \ref{rank}\,(i), we find that $k=0$  and thus  $S=\text{\sl{K}}_1$. Hence  $t\geqslant m(r-2)+2$ and thus $\mathrm{rank}(T)\geqslant r-1$. Since $(\bmi{j}_{t_{1}}, \bmi{0})\not\in\mathrm{row}(A(T))$, we find that $\mathrm{rank}(K)\geqslant r+1$, a contradiction.

\noindent{\bf{\textsf{Subcase 2.2.}}}   $(\bmi{j}_{t_{1}}, \bmi{0})\not\in\mathrm{row}(A(T))$ and  $\bmi{j}_{s}\in\mathrm{row}(A(S))$.

Clearly, $S$ has no isolated vertex. Since  $\mathrm{rank}(T)+\mathrm{rank}(S)\leqslant r$,  by Steps 1, 2,  6, and 8, we deduce that $m(r-2)+3\leqslant \tau+s\leqslant m(r-k)+m(k)$. Applying  Lemma \ref{rank}\,(i), we find that $k=0$,   which contradicts $\bmi{j}_{s}\in\mathrm{row}(A(S))$.

\noindent{\bf{\textsf{Subcase 2.3.}}}   $(\bmi{j}_{t_{1}}, \bmi{0})\in\mathrm{row}(A(T))$ and  $\bmi{j}_{s}\not\in\mathrm{row}(A(S))$.

Since  $\mathrm{rank}(T)+\mathrm{rank}(S)\leqslant r$,  by Steps 1, 2,  6, and 8, we deduce that $m(r-2)+3\leqslant \tau+s\leqslant m(r-k)+m(k)+1$. Applying  Lemma \ref{rank}\,(ii), we find that $k\in\{0, 2, r-2\}$. If $k=r-2$, then $T=\text{\sl{K}}_2$, which contradicts  $\tau\geqslant p+3$.  If $k=2$, then $S=\text{\sl{K}}_1\cup \text{\sl{K}}_2$.
If  $a$ and $b$ belong to the copies of   $K_1$ and $K_2$ in  $S$, respectively,  then by Corollary  \ref{lova}, $\tau\leqslant|\mathnormal{\Delta}(a,b)|\leqslant p+2$,  which is  a contradiction to   Step 1. Hence $k=0$, that is,  $S=\text{\sl{K}}_1$ and $\tau\geqslant m(r-2)+2$.

\noindent{\bf{\textsf{Subcase 2.4.}}}   $(\bmi{j}_{t_{1}}, \bmi{0})\in\mathrm{row}(A(T))$ and  $\bmi{j}_{s}\in\mathrm{row}(A(S))$.

Obviously, $S$ has no isolated vertex. Choose $\mathrm{rank}(T)-1$ linearly independent row vectors  of  $A(T)$ in such a way that they do not generate  $(\bmi{j}_{t_{1}}, \bmi{0})$. Now, the row vectors  of $A(K)$  corresponding to these  row vectors   together with   the row vectors  of $A(K)$  corresponding to a basis for $\mathrm{row}(A(S))$ are  linearly independent.  This  yields  that $\mathrm{rank}(T)+\mathrm{rank}(S)\leqslant r+1$.  So,   by Steps 1, 2, 6,  and 8,  we have $m(r-2)+3\leqslant \tau+s\leqslant m(r-k+1)+m(k)$. Applying  Lemma \ref{rank}\,(ii), we find that $k\in\{2, 3, r-2\}$.
If $k=r-2$, then $T=\text{\sl{K}}_3$ and we may assume without loss of generality  that $t_1=2$. Then by Lemma \ref{lov0}\,(ii), $\mathrm{rank}(\mathbb{G}-T_1)\leqslant r-1$. However,  this  contradicts the minimality of $\mathbb{G}$  as $\mathbb{G}-T_1$ is a reduced graph of order $m(r)-1$.
Therefore, $k\in\{2, 3\}$,  which means that either   $S=\text{\sl{K}}_2$ or $S=\text{\sl{K}}_3$. Using  Step 1,  if $S=\text{\sl{K}}_2$, then $\tau\geqslant m(r-2)+1$, and if $S=\text{\sl{K}}_3$, then $\tau=m(r-2)$ and $p=m(r-2)-3$, as desired.}
\end{proof}

Now we are in the position to prove  our main theorem.

\begin{thm}\label{mainthm}
Assume  that Conjecture \ref{ackconj} is valid  for all reduced graphs of   rank  at most $46$.
Then Conjecture \ref{ackconj} is true for every  reduced graph.
\end{thm}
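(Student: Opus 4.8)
The plan is to argue by contradiction using the minimal counterexample $\mathbb{G}$ of order $n$ and rank $r$ that has been set up in the preceding discussion. By hypothesis Conjecture~\ref{ackconj} holds for all ranks up to $46$, so any counterexample must have $r\geqslant 47$; in particular $r\geqslant 46$ and $r\geqslant 10$, which lets us invoke Lemma~\ref{a1} and Theorem~\ref{aslasl}. The goal is to show that the structural trichotomy of Theorem~\ref{aslasl} is incompatible with the size constraint $\tau=\tau(\mathbb{G})$ forced by Lemma~\ref{a1}, thereby ruling out $\mathbb{G}$ entirely.

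First I would pin down $\tau$. By Lemma~\ref{m1} we have $n=m(r)+1$, and since $r$ is large (odd or even, but in any case $r\geqslant 47$), $m(r)$ is at least $5\cdot 2^{(r-3)/2}-2$ up to the parity split, so the order hypothesis of Lemma~\ref{a1} is met and we get $\rho(\mathbb{G})<\bigl(1-\tfrac{1}{\sqrt 2}\bigr)n$. By Corollary~\ref{lova}, $\rho(\mathbb{G})\leqslant\tau$, but more usefully I want an \emph{upper} bound on $\tau$ that contradicts the \emph{lower} bounds $\tau\geqslant m(r-2)$ appearing in all three cases of Theorem~\ref{aslasl}. The key numerical comparison is between $\bigl(1-\tfrac{1}{\sqrt 2}\bigr)n=\bigl(1-\tfrac{1}{\sqrt 2}\bigr)(m(r)+1)$ and $m(r-2)=\tfrac{1}{2}\bigl(m(r)-2\bigr)$. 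Since $1-\tfrac{1}{\sqrt 2}\approx 0.293<\tfrac12$, the spherical-code bound forces $\tau$ (via $\rho$, if $\rho=\tau$) to be strictly below $m(r-2)$, which already contradicts Theorem~\ref{aslasl}(iii) and the lower bounds in (i),(ii). The hard part is that Lemma~\ref{a1} bounds $\rho$, not $\tau$, so I must bridge the gap between these two parameters.

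To bridge it I would split on whether $\mathrm{rank}(H)\geqslant r-3$. If $\tau\leqslant m(r-2)+2$, then by Lemma~\ref{t1g} we are in the regime $\mathrm{rank}(H)\geqslant r-3$ where Theorem~\ref{aslasl} applies and delivers $\tau\geqslant m(r-2)$; combined with the Lemma~\ref{a1} bound $\rho<\bigl(1-\tfrac{1}{\sqrt2}\bigr)n<m(r-2)$, and using that by definition of a rank-decreasing set one can relate $\rho$ to $\tau$ through Corollary~\ref{lova} (which gives $\rho\leqslant\tau$), I need to instead produce a small rank-dropping set directly. Concretely, in case~(iii) $S=\text{\sl{K}}_3$ with $\tau=m(r-2)$ and $p=m(r-2)-3$; removing the neighborhood of an appropriate vertex, or using Lemma~\ref{lov0}(i) on a vertex whose neighborhood is the small side $T_1$ or $S\cup P$, yields $\rho\leqslant p+(\text{small})\approx m(r-2)$, and the sharp inequality $\rho<\bigl(1-\tfrac1{\sqrt2}\bigr)n$ then collides with $n=m(r)+1=2m(r-2)+3$. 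The other branch, $\tau> m(r-2)+2$, must be handled separately: here $\tau$ is large, but $\rho<\bigl(1-\tfrac1{\sqrt2}\bigr)n<\tfrac12 n$ is even smaller, so removing a minimum rank-dropping set of size $\rho$ gives an induced subgraph to which the minimality of $\mathbb{G}$ and Lemma~\ref{lov0}(iv) apply, again squeezing $n\leqslant m(r)$.

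I expect the main obstacle to be the passage from the \emph{rank} parameter $\rho$, which the spherical-code machinery of Lemma~\ref{a1} controls, to the \emph{duplication} parameter $\tau$, which the structural Theorem~\ref{aslasl} controls. The whole architecture of the paper is designed so that these two meet near $m(r-2)$, and the constant $1-\tfrac{1}{\sqrt2}$ from the angle $\cos^{-1}(\sqrt2-1)$ is precisely what makes $\bigl(1-\tfrac1{\sqrt2}\bigr)(m(r)+1)$ fall just below $m(r-2)$ for all $r\geqslant 47$. Verifying this last inequality is a short computation once the parity cases of $m(r)$ are substituted, but getting the logical routing correct---identifying in each of the cases (i)--(iii) an explicit vertex or neighborhood whose removal realizes $\rho$ at the value forced by the structure---is where the real work lies, and I would organize it as a case analysis mirroring the three outcomes of Theorem~\ref{aslasl}.
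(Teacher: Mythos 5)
Your high-level framing is right---the proof is indeed a collision between the spherical-code bound on $\rho$ and the structural bound $\tau\geqslant m(r-2)$ of Theorem \ref{aslasl}, with Lemma \ref{m1} giving $n=m(r)+1$---but both bridges you propose between $\rho$ and $\tau$ break down. In your branch $\tau\leqslant m(r-2)+2$ you derive $\rho\leqslant p+(\text{small})\approx m(r-2)$ and say this ``collides'' with $\rho<\big(1-\tfrac{1}{\sqrt2}\big)n$: these are two \emph{upper} bounds on the same quantity and cannot contradict each other. The paper's Case 1 works differently: when the minimum rank-dropping set leaves duplicated vertices one gets $L=H$ and $\rho=\tau$, Lemma \ref{m2} (not Lemma \ref{a1}) pins $\tau$ to $\{m(r-2),\,m(r-2)+1\}$, and the two resulting structures are eliminated by minimality arguments, e.g.\ for $S=\text{\sl{K}}_2$ one finds $\upsilon_1$ with $|N(\upsilon_1)|\leqslant m(r-2)+1$, so that $\mathbb{G}-N(\upsilon_1)$ has rank at most $r-2$, no duplicated vertices, and too many vertices, forcing $n\leqslant m(r)$.

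The fatal gap is in your other branch, and it occurs precisely for odd $r$. If deleting a minimum rank-dropping set leaves a \emph{reduced} graph $L$ with $\mathrm{rank}(L)=r-1$ (which Lemma \ref{lov0}\,(iv) allows), then minimality of $\mathbb{G}$ gives only $n-\rho\leqslant m(r-1)$, i.e.\ $\rho\geqslant m(r)+1-m(r-1)=2^{(r-3)/2}+1$ for odd $r$, while Lemma \ref{a1} permits $\rho$ up to roughly $\big(1-\tfrac{1}{\sqrt2}\big)\cdot 5\cdot2^{(r-3)/2}\approx 1.46\cdot2^{(r-3)/2}$: no contradiction. (For even $r$ the same computation gives $3\cdot2^{(r-4)/2}$ against $\approx2.34\cdot2^{(r-4)/2}$ and does yield a contradiction, which is why the hole is easy to overlook.) This surviving case is where the paper spends most of its effort, and the missing idea is to apply Lemma \ref{a1} a \emph{second} time, to $L$ itself---this is the true reason for the constant $1-\tfrac{1}{\sqrt2}$ and for the threshold $47$ (so that $\mathrm{rank}(L)=r-1\geqslant46$). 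Two successive deletions still leave $L_0$ with $|L_0|>\tfrac{1}{\sqrt2}|L|>\tfrac{n}{2}$ and $\mathrm{rank}(L_0)=r-3$; such an $L_0$ must contain duplicated vertices, its duplication classes are pairs whose members across distinct classes are pairwise adjacent (a symmetric-difference argument), so $L_0$ contains $\text{\sl{K}}_\ell$ with $\ell\leqslant r-3$, and the count $n-1-t_0-\ell\leqslant m(r-3)$ gives $m(r-2)\leqslant m(r-3)+r-4$, false for $r\geqslant10$. Without this second descent and the clique argument, the theorem is not proved.
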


\begin{proof}{
Assume that  $r\geqslant47$.
Let $\rho=\rho(\mathbb{G})$ and
$L$ be an induced  subgraph of $\mathbb{G}$ with  $|L|=n-\rho$ and    $\mathrm{rank}(L)<r$.
By Lemma \ref{lov0}\,(iv),  $\mathrm{rank}(L)\geqslant r-2$.
We consider the  following two  cases.

\noindent{\bf{\textsf{Case 1.}}} $\mathrm{rank}(L)=r-2$.

If $H$ has no  duplicated vertices, then by Lemma  \ref{m2} and the minimality of $\mathbb{G}$,
$$\tfrac{m(r)-1}{2}=\tfrac{n}{2}-1<|L|-1\leqslant m(r-2),$$
a contradiction. Hence  $L$ has  duplicated vertices and so $L=H$. Furthermore, by  Lemma \ref{m2} and Theorem \ref{aslasl},  we  obtain  that   $m(r-2)\leqslant\tau=\rho\leqslant m(r-2)+1$.
First suppose  that  $\tau=m(r-2)$. By Theorem \ref{aslasl}, $S=\text{\sl{K}}_3$ and so $p=m(r-2)-3$.
For any pair   $i, j\in\{1, 2, 3\}$, $\mathnormal{\Delta}(\upsilon_i, \upsilon_j)$ contains at least $p-1$ vertices of $P$. It follows that every vertex of $S$ has at most three  neighbors in $P$ and so $p\leqslant 7$ implying that $m(r-2)\leqslant10$,  which is impossible for $r\geqslant8$.

Next suppose  that  $\tau=m(r-2)+1.$ By Theorem \ref{aslasl},  $S=\text{\sl{K}}_2$ and hence    $p=m(r-2)-2$.
Obviously,    $|N_P(\upsilon_1)\cap N_P(\upsilon_2)|\leqslant1$ and thus   for either $\upsilon_1$ or $\upsilon_2$, say $\upsilon_1$, we have $|N_P(\upsilon_1)|\leqslant p/2$.
We may assume that $t_1\leqslant t_2$ implying that
$|N_T(\upsilon_1)|\leqslant(\tau-1)/2$.  Hence  $|N(\upsilon_1)|\leqslant m(r-2)+1$. By Lemma \ref{lov0}\,(i),
$\mathbb{G}- N(\upsilon_1)$ is  of rank at most $r-2$ with an isolated vertex and   no  duplicated vertices. This means that $n\leqslant m(r)$, a contradiction.

\noindent{\bf{\textsf{Case 2.}}} $\mathrm{rank}(L)=r-1$.

By Lemma \ref{lov0}\,(iv), $L$ is necessarily reduced.
From  Lemma \ref{a1}, $\rho<\big(1-\frac{1}{\sqrt{2}}\big)n$ and therefore   $|L|>\tfrac{n}{\sqrt{2}}>5\cdot2^{(r-4)/2}-2$.
Thus Lemma \ref{a1} implies that   $\rho(L)<\big(1-\tfrac{1}{\sqrt{2}}\big)|L|$.
Let $L_0$ be an induced  subgraph of $L$ with $|L_0|=|L|-\rho(L)$  and  $\mathrm{rank}(L_0)<\mathrm{rank}(L)$.
Put $T_0=\mathbb{G}- L_0$ and  $t_0=|T_0|$.
We have $|L_0|>\tfrac{1}{\sqrt{2}}|L|>\tfrac{n}{2}$ and $t_0<\tfrac{n}{2}$. If $L_0$ has no duplicated vertices, then $\tfrac{n}{2}-1<|L_0|-1\leqslant m(r-2)$,  a contradiction. So,  $L_0$ has duplicated vertices which in turn implies that  $\mathrm{rank}(L_0)=r-3$ by Lemma \ref{lov0}\,(iv).
Hence  $\tau\leqslant t_0\leqslant m(r-2)+1$.
Using    Lemma \ref{t1g}  and Theorem \ref{aslasl},   it follows that $\tau\geqslant m(r-2)$.
Therefore,  either $t_0=\tau$ or $t_0=\tau+1$.
Moreover, since $\tau(L)=\rho(L)$ and $\mathrm{rank}(L_0)=\mathrm{rank}(L)-2$, applying Lemma \ref{lov}\,(iii) for $L$, we deduce that  each   duplication class of  $L_0$ consists of two vertices.

We claim that any two  vertices  from two  distinct duplication classes of  $L_0$ are adjacent.
By  contradiction,  suppose that  $U_1=\{u_1, u_1'\}$ and $U_2=\{u_2, u_2'\}$ are two  distinct duplication classes of  $L_0$ with no edges
between them.  Let
$Q= V(T_0)\cap\mathnormal{\Delta}(u_1, u_1')\cap\mathnormal{\Delta}(u_2, u_2')$.
In a similar manner  to the one  used in the proof of Lemma \ref{lov}\,(iii),   we can show that there exist two  disjoint sets $Q_1$ and  $Q_2$  such that $Q=Q_1\cup Q_2$, $Q_1\subseteq N(u_i)\setminus N(u_i')$ and  $Q_2\subseteq N(u_i')\setminus N(u_i)$,  for  $i=1,2$.
From  $t_0\leqslant\tau+1$,  we deduce that for every   duplication class $\{x, y\}$   of $L_0$,  there is at most one  vertex of $T_0$  which is not in $\mathnormal{\Delta}(x,  y)$.
This yields that   $|T_0-Q|\leqslant2$. Furthermore, by the maximality of $L_0$, it is easy to find two  vertices $w_1\in U_1$ and $w_2\in U_2$   such that  at most one   vertex of $T_0-Q$ is contained  in
$\mathnormal{\Delta}(w_1, w_2)$.  Hence
\begin{eqnarray*}
\tau\leqslant|\mathnormal{\Delta}(w_1, w_2)|\leqslant\left\{\begin{array}{lcc}
|L_0|-4 &  \hspace{-5mm}  \text{ if }  t_0=\tau, \\
|L_0|-3 &  \hspace{1.5mm}  \text{ if }  t_0=\tau+1.\\
\end{array}\right.
\end{eqnarray*}
This implies  that $\tau\leqslant n-\tau-4$,
which  contradicts  $\tau\geqslant m(r-2)$. This establishes  the claim.

From the previous paragraph,   it follows that $L_0$ contains  a copy of   $\text{\sl{K}}_\ell$, where $\ell$ is the number of duplication classes  of $L_0$.
Since $\mathrm{rank}(L_0)= r-3$, we conclude that    $\ell\leqslant r-3$.
Thus
$$n-1-\big(m(r-2)+1\big)-(r-3)\leqslant n-1-t_0-\ell\leqslant m(r-3).$$
This  in turn implies that $m(r-2)\leqslant m(r-3)+r-4$, which is  impossible for $r\geqslant10$.

Therefore, we obtain contradictions in both cases and the proof is complete.}
\end{proof}

We finally  mentation that,  similar to  the proofs of Lemmas \ref{phius} and \ref{a1},  one  can verify  the following Lemmas.

\begin{lem}\label{8phius}
For every integer  $n\geqslant2$, $\text{\sl{M}}\big(n, \cos^{-1}(\sqrt{2}-1)\big)<5\cdot2^{(n+2)/2}-2$.
\end{lem}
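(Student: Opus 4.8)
The plan is to exploit the fact that the bound asserted here, $5\cdot2^{(n+2)/2}-2$, exceeds by a factor of roughly $8$ the bound $5\cdot2^{(n-4)/2}-2$ already proved in Lemma~\ref{phius} for $n\geqslant47$. Since $5\cdot2^{(n-4)/2}-2<5\cdot2^{(n+2)/2}-2$ for every $n$, the range $n\geqslant47$ is immediate from Lemma~\ref{phius}. Thus the entire content of the statement is the finite range $2\leqslant n\leqslant46$, and I would reduce to this range first and then dispatch it with the same tools assembled in Section~3.

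For $3\leqslant n\leqslant46$ I would reuse, essentially verbatim, the small-$n$ argument that closes the proof of Lemma~\ref{phius}: apply the Leven\v{s}te\u{\i}n bound of Theorem~\ref{Levbd} with $s=\cos\varphi_{_0}=\sqrt2-1$ and check by a $\mathsf{Maple}$ computation that the resulting estimate for $\text{\sl{M}}(n,\varphi_{_0})$ stays below $5\cdot2^{(n+2)/2}-2$. What makes this routine is that the target is now eight times larger, so there is far more slack than in Lemma~\ref{phius}. In fact, since $\text{\sl{M}}(n,\varphi)$ is nondecreasing in $n$, for $41\leqslant n\leqslant46$ the inequality comes for free from $\text{\sl{M}}(n,\varphi_{_0})\leqslant\text{\sl{M}}(47,\varphi_{_0})<5\cdot2^{43/2}-2\leqslant5\cdot2^{(n+2)/2}-2$ using Lemma~\ref{phius}, so only the genuinely small cases $3\leqslant n\leqslant40$ need to be certified by computation.

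The single case outside the scope of Theorem~\ref{Levbd}, which requires $n\geqslant3$, is $n=2$, which I would settle directly. On the unit circle, $k$ vectors with pairwise angle at least $\varphi_{_0}$ exist exactly when $2\pi/k\geqslant\varphi_{_0}$, whence $\text{\sl{M}}(2,\varphi_{_0})=\lfloor2\pi/\varphi_{_0}\rfloor=5<18=5\cdot2^{(2+2)/2}-2$.

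I expect the only obstacle to be computational rather than conceptual: across the finite range $3\leqslant n\leqslant46$ one must keep track of which subinterval $[t_{k-1}^{1,1},t_k^{1,0})$ or $[t_k^{1,0},t_k^{1,1})$ contains $s=\sqrt2-1$ for each $n$, so that the correct branch of Theorem~\ref{Levbd} is applied. This is the identical bookkeeping already performed for $47\leqslant n\leqslant118$ in Lemma~\ref{phius}, now with a wider margin, so no new difficulty arises. Note that the cruder analytic estimate (\ref{bdmn}) does not suffice in this range, since its polynomial prefactor $n^2-1$ is not yet dominated by the exponential gap between $(\sqrt2\sin\tfrac{\varphi_{_0}}{2})^{-n}$ and $2^{n/2}$; this is precisely why the sharper Leven\v{s}te\u{\i}n bound is invoked.
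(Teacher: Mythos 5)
Your proposal is correct, and at its core it is the same argument the paper intends: the paper's entire justification of Lemma \ref{8phius} is the remark that it "can be verified similar to the proof of Lemma \ref{phius}," i.e., by the Rankin/integral estimate for large $n$ and the Leven\v{s}te\u{\i}n bound of Theorem \ref{Levbd} with machine computation for the remaining range. Your version is a tightened and, in places, more complete form of that sketch. Instead of re-running the analytic estimate (\ref{bdmn}), you invoke Lemma \ref{phius} itself for $n\geqslant47$ (legitimate, since $5\cdot2^{(n-4)/2}-2<5\cdot2^{(n+2)/2}-2$), use the monotonicity $\text{\sl{M}}(n,\varphi)\leqslant\text{\sl{M}}(n+1,\varphi)$ (append a zero coordinate) to dispose of $41\leqslant n\leqslant46$, and keep the Leven\v{s}te\u{\i}n computation only for $3\leqslant n\leqslant40$. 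Two of your points actually repair details the paper's one-line proof glosses over: the case $n=2$ lies outside the hypotheses of both Theorem \ref{Levbd} (which needs $n\geqslant3$) and (\ref{bdmn}) (which here needs $n\geqslant6$, as $\tan^2\alpha=\sqrt2$), and your circle argument $\text{\sl{M}}(2,\varphi_{0})=\lfloor2\pi/\varphi_{0}\rfloor=5<18$ settles it; moreover, your observation that (\ref{bdmn}) fails throughout $n\leqslant46$ (e.g., at $n=40$ it gives roughly $7\times10^{7}$ against a target of about $10^{7}$) correctly explains why the computational step cannot be replaced by the crude bound in this range. What remains contingent on computation (the Leven\v{s}te\u{\i}n check for $3\leqslant n\leqslant40$) is exactly the status of the paper's own argument, and with substantially more slack here since the target bound is eight times larger.
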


\begin{lem}\label{8a1}
Let $G$ be a reduced graph of order $n$ and rank $r$. If $n\geqslant5\cdot2^{(r+3)/2}-2$, then $\rho(G)<\big(1-\tfrac{1}{\sqrt{2}}\big)n$.
\end{lem}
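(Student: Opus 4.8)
The final statement is Lemma~\ref{8a1}, which mirrors Lemma~\ref{a1} but with the weaker hypothesis $n \geqslant 5 \cdot 2^{(r+3)/2}-2$ (and no lower bound on $r$), concluding the same inequality $\rho(G) < (1-\tfrac{1}{\sqrt 2})n$. Since the paper explicitly says it is proved ``similar to'' Lemma~\ref{a1} using Lemma~\ref{8phius}, the plan is to transplant the geometric argument of Lemma~\ref{a1} verbatim and invoke the companion spherical-code bound~\ref{8phius} in place of Lemma~\ref{phius}. Notice that Lemma~\ref{8phius} differs from Lemma~\ref{phius} only in the exponent of the threshold: it asserts $M(n,\varphi_{_0}) < 5\cdot 2^{(n+2)/2}-2$ for \emph{all} $n\geqslant 2$, which is a looser bound holding on the full range. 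This looseness is precisely what lets us weaken the hypothesis on $n$ (and drop the restriction $r\geqslant 46$) while reaching the same conclusion.

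First I would argue by contradiction, assuming $\rho(G) \geqslant (1-\tfrac{1}{\sqrt 2})n$. As in Lemma~\ref{a1}, form the $\pm 1$ matrix $M$ by replacing every $0$ entry of $A(G)$ with $-1$; then $\mathrm{rank}(M)\leqslant r+1$, so the rows of $\tfrac{1}{\sqrt n}M$ are $n$ unit vectors lying in a space of dimension at most $r+1$. By Corollary~\ref{lova} together with the contradiction hypothesis, every pairwise inner product satisfies $\langle x,y\rangle \leqslant \tfrac{n-2\rho(G)}{n}\leqslant \sqrt 2 - 1$, so the pairwise angles are all at least $\varphi_{_0}=\cos^{-1}(\sqrt 2-1)$. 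Hence $n \leqslant M(r+1,\varphi_{_0})$. This part is a direct copy of the opening of Lemma~\ref{a1}'s proof and requires no change.

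The only substantive difference is the final counting step. Here I would apply Lemma~\ref{8phius} with $n$ replaced by $r+1$: this gives $M(r+1,\varphi_{_0}) < 5\cdot 2^{(r+3)/2}-2$, exactly matching the threshold in the hypothesis. Combined with the bound $n\leqslant M(r+1,\varphi_{_0})$ from the previous step, this forces $n < 5\cdot 2^{(r+3)/2}-2$, contradicting the assumption $n \geqslant 5\cdot 2^{(r+3)/2}-2$. The arithmetic to check is merely that substituting $r+1$ for the dummy variable in Lemma~\ref{8phius} produces the exponent $\tfrac{(r+1)+2}{2}=\tfrac{r+3}{2}$, which lines up with the stated bound.

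I do not expect any genuine obstacle: the entire content has been pushed into Lemma~\ref{8phius}, and the graph-theoretic scaffolding (the $\pm 1$ lift, the rank bound $r+1$, and Corollary~\ref{lova}) is identical to Lemma~\ref{a1}. The only point demanding a moment's care is the absence of a restriction on $r$ in the hypothesis, which is harmless because Lemma~\ref{8phius} is valid for every integer $n\geqslant 2$, so the inequality $n\leqslant M(r+1,\varphi_{_0}) < 5\cdot 2^{(r+3)/2}-2$ is available for all relevant $r$ without needing $r$ to be large. This is exactly why the weaker spherical-code bound was recorded separately.
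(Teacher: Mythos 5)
Your proposal is correct and follows exactly the route the paper intends: the paper gives no separate proof of Lemma~\ref{8a1}, stating only that it is verified ``similar to'' Lemma~\ref{a1}, and your argument is precisely that transplant --- the $\pm1$ lift with $\mathrm{rank}\leqslant r+1$, the inner-product bound $\tfrac{n-2\rho(G)}{n}\leqslant\sqrt2-1$ via Corollary~\ref{lova}, and then Lemma~\ref{8phius} applied in dimension $r+1$ to get $n<5\cdot2^{(r+3)/2}-2$, a contradiction. Your exponent check $\tfrac{(r+1)+2}{2}=\tfrac{r+3}{2}$ and the observation that the unrestricted range $n\geqslant2$ in Lemma~\ref{8phius} is what permits dropping the condition $r\geqslant46$ are both exactly right.
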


For every integer $r\geqslant2$, define  $m'(r)=8m(r)+14$.
Notice that  $m'(r)=2m'(r-2)+2$, whenever
$r\geqslant4$. Now, using this equality,
Lemmas \ref{8phius}  and  \ref{8a1} as well as the approach given in this section, we are able to establish  the following theorem.

\begin{thm}\label{8main}
For every integer $r\geqslant2$, the order of any     reduced  graph of  rank $r$
is at most    $m'(r)$.
\end{thm}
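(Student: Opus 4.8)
The plan is to transcribe the whole of Section 4---Lemmas \ref{m1}, \ref{m2}, \ref{t1g}, Theorem \ref{aslasl}, and the proof of Theorem \ref{mainthm}---replacing the function $m$ by $m'$ everywhere. Three facts make this legitimate. First, $m'$ obeys the same two-step recurrence $m'(r)=2m'(r-2)+2$ noted just above, so every appeal to $m(r)=2m(r-2)+2$ (for instance the contradiction in Lemma \ref{m1} and the reductions throughout Theorem \ref{aslasl}) survives unchanged; likewise the easy Lemma \ref{m2}, whose proof uses only Rankin's identity $\text{\sl{M}}(n,\tfrac{\pi}{2})=2n$, transcribes and in fact improves, since $m'(r)>2(r+1)$ already for $r\ge2$. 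Second, the two spherical-code inputs become Lemmas \ref{8phius} and \ref{8a1}; being looser by exactly the factor $8$ that defines $m'$, they carry \emph{no} rank restriction, so the condition $r\ge46$ that produced the ``rank at most $46$'' threshold in Theorem \ref{mainthm} evaporates. Third, because $m'(r)\ge m(r)$ and Conjecture \ref{ackconj} is already verified for rank at most $9$, the bound $|G|\le m'(r)$ holds automatically for every reduced graph of rank at most $9$, furnishing all base cases.

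I would first record the $m'$-analog of Lemma \ref{rank}: for $r\ge10$ and $3\le k\le r-3$ one has $m'(k)+m'(r-k)\le m'(r-2)+1$, and for $r\ge10$ and $4\le k\le r-3$ one has $m'(k)+m'(r-k+1)\le m'(r-2)$. Writing $m'(x)=8m(x)+14$, each of these is equivalent (all quantities being integers) to the sharper $m$-inequality $m(k)+m(r-k)\le m(r-2)-2$, respectively $m(k)+m(r-k+1)\le m(r-2)-2$. For the interior range $5\le k\le r-5$ these follow by expanding with the recurrence and applying the already-proven Lemma \ref{rank} at rank $r-4$ (legitimate since $r\ge10$ forces $r-4\ge6$); the boundary values $k\in\{3,4,r-4,r-3\}$ reduce to estimates such as $m(r-2)-m(r-3)\ge5$ that hold exactly once $r\ge10$. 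As these analogs are invoked only inside Theorem \ref{aslasl}, which already assumes $r\ge10$, the strengthened rank hypothesis is harmless.

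With these in hand I would let $\mathbb{G}$ be a reduced graph of rank $r$ with $|\mathbb{G}|>m'(r)$ of least order, set $n=|\mathbb{G}|$, and note that the base cases force $r\ge10$ while minimality guarantees that every proper reduced induced subgraph of $\mathbb{G}$ meets its own $m'$-bound---exactly the hypothesis used implicitly throughout Section 4. The $m'$-versions of Lemma \ref{m1} (giving $n=m'(r)+1$), of Lemma \ref{t1g}, and of the structural trichotomy for $S$ in Theorem \ref{aslasl} then follow by the same arguments, and the concluding proof again divides into $\mathrm{rank}(L)=r-2$ and $\mathrm{rank}(L)=r-1$, with Lemma \ref{8a1} taking the place of Lemma \ref{a1} in the latter. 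In every branch the same numerical contradiction resurfaces, now in the shape $m'(r-2)\le m'(r-3)+r-4$ or $m'(r-2)\le c$ for a small explicit constant $c$, each impossible for $r\ge10$ since $m'$ grows exponentially.

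The one place that is not a mechanical substitution---and the step I expect to be the crux---is verifying that the thresholds of Lemma \ref{8a1} stay compatible with the value $n=m'(r)+1$ at each application of a spherical-code bound. Concretely, in the $\mathrm{rank}(L)=r-1$ case one must confirm that $|L|>n/\sqrt2$ still exceeds the rank-$(r-1)$ threshold $5\cdot2^{((r-1)+3)/2}-2$ of Lemma \ref{8a1}, the precise analog of the chain $|L|>n/\sqrt2>5\cdot2^{(r-4)/2}-2$ in Theorem \ref{mainthm}; this reduces to a short computation using $m'(r)=5\cdot2^{(r+3)/2}-2$ for odd $r$. Once that comparison and the finitely many boundary checks of the $m'$-version of Lemma \ref{rank} are confirmed, the contradiction holds for all $r\ge10$, and together with the verified range $r\le9$ this yields $|G|\le m'(r)$ for every $r\ge2$.
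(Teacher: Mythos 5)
Your plan coincides with the paper's own proof, which is only a sketch ("using this equality, Lemmas \ref{8phius} and \ref{8a1} as well as the approach given in this section"), and most of your scaffolding is sound: the recurrence $m'(r)=2m'(r-2)+2$, the rank-free spherical-code lemmas, the base cases from the verified conjecture at rank at most $9$, the reduction of the $m'$-analog of Lemma \ref{rank} to the sharper inequality $m(k)+m(r-k)\leqslant m(r-2)-2$ (correct for $r\geqslant 10$, and indeed false at $r=9$, $k=3$), and the threshold computation for Lemma \ref{8a1} in Case 2 of the main proof.

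There is, however, a genuine gap in the assertion that Theorem \ref{aslasl} "follows by the same arguments" once every proper reduced subgraph is bounded by the $m'$-value of its rank. The paper's proof of Theorem \ref{aslasl} applies Lemma \ref{rank} outside its stated range of $k$: Step 7 needs a contradiction for every $2\leqslant k\leqslant r-2$, Case 1 for every $3\leqslant k\leqslant r-2$, and Subcase 2.2 for every $2\leqslant k\leqslant r-2$. The out-of-range values $k\in\{2,3\}$ (and symmetrically $k=r-2$) are disposed of implicitly by the exact values $m(2)=2$ and $m(3)=3$; for instance, in Step 7 with $k=2$ one gets $\tau+s+\epsilon\leqslant m(r-2)+m(2)=m(r-2)+2<m(r-2)+3$. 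These steps do \emph{not} survive the substitution $m\mapsto m'$, because $m'(2)=30$ and $m'(3)=38$ are no longer negligible: using only the minimality-supplied $m'$-bounds, Step 7 at $k=2$ yields $\tau+s+\epsilon\leqslant m'(r-2)+m'(2)=m'(r-2)+30$, which is perfectly compatible with the lower bound $m'(r-2)+3$, so no contradiction arises; Case 1 at $k=3$ yields $\tau+s\leqslant 2m'(r-4)+m'(3)=m'(r-2)+36$, again no contradiction; Subcase 2.2 at $k=2$ and the cases $k=r-2$ fail the same way. Your recorded analogs of Lemma \ref{rank}, stated only for $3\leqslant k\leqslant r-3$ and $4\leqslant k\leqslant r-3$, do not cover these values, and you explicitly identify the $m'$-bound from minimality as "exactly the hypothesis used implicitly throughout Section 4," which is precisely what is insufficient here. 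The repair is easy but is not a mechanical substitution: for subgraphs of rank $2$ or $3$ one must use the exact bounds $2$ and $3$ (the only reduced graphs of rank $2$ and $3$ are $K_2$ and $K_3$; equivalently, invoke the verified conjecture at small rank rather than minimality). With that hybrid convention --- sharp $m$-bounds for subgraphs of small rank, $m'$-bounds from minimality otherwise --- every step of Section 4 does transcribe, and the rest of your outline goes through.
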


\section*{Acknowledgments}
This research was in part supported by grants from IPM to the first author (No. 92050114) and the second author
(No. 92050405).  The authors are grateful to anonymous referees for valuable comments on an earlier draft of this article.

{}

\end{document}